\newcommand{\MM}{\mathbb{M}}
\newcommand{\Q}{\mathbb{Q}}
\newcommand{\Aux}{\tilde{Y}}
\newcommand{\e}{\varepsilon}
\newcommand{\C}{\mathbb{C}}
\newcommand{\Z}{\mathbb{Z}}
\newcommand{\N}{\mathbb{N}}
\newcommand{\links}{\left(\begin{array}{cc}}
\newcommand{\rechts}{\end{array}\right)}
\newcommand{\hidari}{\left(\begin{array}{c}}
\newcommand{\migi}{\end{array}\right)}
\newcommand{\lord}{\Z_p[\Delta][X]}
\newcommand{\CC}{{\mathcal A}}
\newcommand{\LL}{{\mathcal H}}
\newcommand{\llll}{{h}}
\newcommand{\bb}{{\beta}}
\newcommand{\Spec}{{\mathrm{Spec}}}
\newcommand{\Gal}{{\mathrm{Gal}}}
\newcommand{\cor}{{\mathrm{cor}}}
\newcommand{\Hom}{{\mathrm{Hom}}}
\newcommand{\Tr}{{\mathrm{Tr}}}
\newcommand{\col}{{\mathrm{Col}}}
\newcommand{\rank}{{\mathrm{rank}}}
\newcommand{\coker}{{\mathrm{Coker}\,}}
\renewcommand{\ker}{{\mathrm{Ker}\,}}
\newcommand{\Char}{{\mathrm{Char}}}
\newcommand{\GL}{{\mathrm{GL}}}
\newcommand{\im}{{\mathrm{Im}\,}}
\newcommand{\Sel}{{\mathrm{Sel}}}
\newcommand{\gp}{{\mathfrak p}}
\newcommand{\gm}{{\mathfrak m}}
\renewcommand{\H}{{\mathcal{H}}}
\renewcommand{\qedsymbol}{{$\mathit{QED}$}}
\def\proj{\mathop{\mathrm{proj}}\nolimits}
\def\Iw{\mathop{\mathrm{Iw}}\nolimits}
\renewcommand{\phi}{{\varphi}}
\newcommand{\F}{{\mathcal F}_{ss}}
\newcommand{\G}{{\mathcal G}}
\newcommand{\X}{{\mathcal X}}
\newcommand{\dn}{{\delta_n^i}}
\newcommand{\dI}{{\delta_n^{i+1}}}
\newtheorem{theorem}{Theorem}[section]
\newtheorem{conjecture}[theorem]{Conjecture}
\newtheorem{mainconjecture}[theorem]{Main Conjecture}
\newtheorem{corollary}[theorem]{Corollary}
\newtheorem{example}[theorem]{Example}
\newtheorem{lemma}[theorem]{Lemma}
\newtheorem{propn}[theorem]{Proposition}
\newtheorem{open problem}[theorem]{Open Problem}
\theoremstyle{definition}
\newtheorem{observation}[theorem]{Observation}
\newtheorem{definition}[theorem]{Definition}
\newtheorem{notation}[theorem]{Notation}
\newtheorem{remark}[theorem]{Remark}
\newtheorem{maintheorem}[theorem]{Main Theorem}
\journal{a journal}
\begin{document}

\begin{frontmatter}



\title{Iwasawa theory for elliptic curves at supersingular primes: A pair of main conjectures}


\author{Florian Sprung $\lfloor$``ian"$\rfloor$}

\address{151 Thayer Street, Box \#1917, Providence, RI 02912}

\begin{abstract}
 We extend Kobayashi's formulation of Iwasawa theory for elliptic curves at supersingular primes to include the case $a_p \neq 0$, where $a_p$ is the trace of Frobenius. To do this, we algebraically construct $p$-adic $L$-functions $L_p^{\sharp}$ and $L_p^{\flat}$ with the good growth properties of the classical Pollack $p$-adic $L$-functions that in fact match them exactly when $a_p=0$ and $p$ is odd. We then generalize Kobayashi's methods to define two Selmer groups $\Sel^{\sharp}$ and $\Sel^{\flat}$ and formulate a main conjecture, stating that each characteristic ideal of the duals of these Selmer groups is generated by our $p$-adic $L$-functions $L_p^{\sharp}$ and $L_p^{\flat}$. We then use results by Kato to prove a divisibility statement.
\end{abstract}

\begin{keyword}
Elliptic curves \sep Iwasawa theory \sep supersingular primes


\end{keyword}

\end{frontmatter}


\begin{section}{Introduction}
In the early 1970's, B. Mazur and P. Swinnerton-Dyer constructed a $p$-adic $L$-function for an elliptic curve $E/\Q$ when $p$ is good ordinary. Their $L$-function is an Iwasawa function, i.e. a $p$-adic analytic function convergent on the closed $p$-adic unit disc, and interpolates special values of the complex $L$-series of $E$ twisted by various characters. Later that decade, Mazur formulated the Iwasawa theory for elliptic curves at these good ordinary primes, relating the $p$-adic $L$-function to the Selmer group over the cyclotomic $\Z_p$-extension $\Q_\infty$. The good supersingular case (the one for which the trace of Frobenius $a_p$ is divisible by $p$) has not been in as good a shape yet. Thanks to the work of R. Pollack from the early 2000's, we now have a \textit{pair} of Iwasawa functions in the case $a_p=0$, for which S. Kobayshi was able to formulate a pair of Iwasawa main conjectures by relating each of Pollack's Iwasawa functions to a modified Selmer group. However, the general supersingular case has so far seemed less amenable to analysis. The main theorems of this article obtain an appropriate pair of Iwasawa functions and relate them to a new pair of modified Selmer groups via a pair of main conjectures. Our pairs of Iwasawa functions and Selmer groups compare favorably with the works of Pollack and Kobayashi when reduced to the case $a_p=0$. 

More precisely, let $\Gamma := \Gal(\Q_\infty/\Q)$. We can identify the ring of power series $\Z_p[[X]]$ with $\Z_p[[\Gamma]]$. In the good ordinary case, Mazur's and Swinnerton-Dyer's $p$-adic $L$-function (see \cite{mazurSD}, but also \cite{mazurbourbaki}) is an element of $\Q_p\otimes\Z_p[[X]]$ and thus an Iwasawa function, but conjecturally should live in the simpler ring $\Z_p[[X]]$. Mazur conjectured in \cite{mazurmainconjecture} that the Selmer group is $\Z_p[[X]]$-cotorsion (i.e. the Pontryagin dual is $\Z_p[[X]]$-torsion - Kato proved this in \cite{kato}), and conjectured that the $p$-adic $L$-function generates its characteristic ideal. This is the main conjecture, a proof of which has been announced by Skinner and Urban \cite{skinnerurban}.

In the 1980's, Mazur, Tate, and Teitelbaum constructed more general $p$-adic $L$-functions in \cite{MTT}, among others for the supersingular (a.k.a. extraordinary) case, reconstructing $p$-adic $L$-functions of Vi\v{s}ik \cite{vishik}, and Amice and V\'{e}lu \cite{amicevelu}. This case contains infinitely many primes $p$ by a theorem of Elkies \cite{elkies}, but the $p$-adic $L$-functions are no longer elements of $\Z_p[[X]]\otimes \overline{\Q_p}$, and correspondingly, the Selmer group is no longer $\Z_p[[X]]$-cotorsion. In fact, there are two $p$-adic $L$-functions, one for each root $\alpha$ of the Hecke polynomial $Y^2-a_pY+p$, denoted $L_p(E,\alpha,X)$.

In the last decade, Pollack noticed in \cite{pollacksthesis} that if $a_p=0$, one may get far by constructing auxiliary functions $\log_p^\pm$ vanishing at all $p^{n}$th roots of unity for $n$ even ($+$) or $n$ odd ($-$). Using \textit{analytic} methods, he expressed each of these two $p$-adic $L$-functions as a certain sum, e.g. for $p$ odd,
\[ L_p(E,\alpha,X)=L_p^+(E,X)\log_p^+(1+X)+L_p^-(E,X)\log_p^-(1+X)\alpha,\]
where $L_p^\pm \in \Z_p[[X]]$. Kobayashi then constructed two $\Z_p[[X]]$-cotorsion subgroups of the Selmer group $\Sel^\pm$ in \cite{kobayashisthesis} and formulated a pair of Iwasawa main conjectures in the spirit of Mazur's ordinary one: Each of Pollack's $L_p^\pm$-functions should generate the characteristic ideal of the Pontryagin duals of $\Sel^\pm$. He used \textit{algebraic} arguments to reconstruct Pollack's $L_p^\pm$-functions and to define his Selmer groups, and proved a divisibility statement of his main conjecture via work of Kato, but still assumed $a_p=0$. 

What makes the case $a_p = 0$ so much more convenient than the case $a_p \neq 0$ is that it allowed Pollack and Kobayashi to give \textit{separate} constructions for their appropriate objects $L_p^\pm, \log^\pm, $ and $\Sel^\pm$, which break down for $a_p\neq0$. For example, Pollack worked his observation that the sum $L_p(E,\alpha,X) + L_p(E,\overline{\alpha},X)$ vanishes at $X=\zeta_{p^n}-1$ for even $n$ into the definition of $\log^+$. What makes Pollack's observation work is that $\alpha^2=-p$ is an easy integer, since it is $1$ modulo powers of $p$ and signs! This \textit{two-periodicity} of $\alpha$ and related objects was the crucial ingredient that made the arguments in Kobayashi's paper work as well. In this paper, we give a method that is independent of any periodicity, constructing the essential objects \textit{simultaneously}. For example, there are four appropriate generalizations of $\log^\pm$ which we define as entries of a limit of an infinite product of $2\times2$ matrices. In general, we can't know just one of the four entries without knowing the other three since its definition involves information coming from every factor in the matrix product.

We then formulate a main conjecture in the spirit of Mazur's that relates our $p$-adic $L$-functions to subgroups of the Selmer group when $p$ is odd, and prove a divisibility statement.

We note that the Hasse-Weil bound $|a_p|\leq 2\sqrt{p}$ (see e.g. \cite[Chapter 5]{silverman}) forces $p=2$ or $p=3$ when $a_p\neq 0$. But we have worked out our methods so that they should generalize to general modular forms of weight two (with larger relevant primes). For elliptic curves, appropriate examples include any curve of conductor $11$ ($a_2=-2$) and a classical curve of Mordell given by $y^2+y=x^3-x$ ($a_2=-2, a_3=-3$). A look at the Cremona tables reveals that $70$ out of the $471$ curves of conductor $\leq 299$ have $a_p\neq 0$ for some supersingular $p$. The first curve with $a_2=2$ is $67A$, that with $a_3=3$ is $140B$, and the first curve with $a_2=2, a_3=3$ is $319A$. The two curves $245A$ and $245B$ both have $a_2=-2$, while $a_3=-3$ for the first, and $a_3=3$ for the latter.

There has been much work in the direction of extending the theory presented in this paper. Antonio Lei \cite{Lei} has generalized Kobayashi's methods to modular forms of higher weight, assuming $a_p=0$. Lei, Loeffler, and Zerbes have given one generalization to any odd prime of good reduction in \cite{llz} for modular forms of higher weight using the theory of Wach modules. They show the existence of pairs of $p$-adic $L$-functions which live in (a ring analogous to\footnote{Instead of $\Z_p$, they work with the ring of integers of the completion at $p$ of the number field generated by the eigenvalues of the Hecke operators and the values of the nebencharacter.}) $\Z_p[[X]]\otimes \Q$, and a matrix that relates them to the $p$-adic $L$-functions of Amice and V\'{e}lu, and Vi\v{s}ik. It would be nice to see if these matrices could be expressed \textit{explicitly}, and to construct $p$-adic $L$-functions in (the ring analogous to) $\Z_p[[X]]$, as in this paper. Their paper suggests that this is possible and is thus a huge hint for this question!

$\text{\bf{Overview. }}$ 
We now give a sketch of our methods in the case of an odd prime $p$. Denote by $\Phi_n(X)=\displaystyle \sum_{t=0}^{p-1} X^{p^{n-1}t}$ the $p^n$th cyclotomic polynomial, which is the irreducible polynomial for any primitive $p^n$th root of unity $\zeta_{p^n}$. 
Put $k_n=\Q_p(\zeta_{p^{n+1}})$, and let $\G_n=\Gal(\Q(\zeta_{p^{n+1}})/\Q)\cong\Gal(k_n/\Q)$. Let $\Lambda_n=\Z_p[\G_n]$. 
 Taking inverse limits, put $\G_\infty= \displaystyle\varprojlim_n \G_n$, and let $\Lambda=\Z_p[[\G_\infty]]$. 

The key construction in this paper is that of two Coleman maps $\col^{\sharp}$ and $\col^{\flat}$. They are generalizations of Kobayashi's $\col^\pm$ (since for $a_p=0$, $p$ odd, we have $\col^{\sharp}=\col^-, \col^{\flat}=\col^+$). As such, they are $\Lambda$-valued and just as (the trivial character-components of) $\col^\pm$ sent Kato's zeta element to Pollack's $p$-adic $L$-functions, (the trivial character-components of) $\col^{\sharp}$ and $\col^{\flat}$ send Kato's zeta elements to new $p$-adic $L$-functions $L_p^{\sharp}(E,X)$ and $L_p^{\flat}(E,X)$, which are natural generalizations of Pollack's $p$-adic $L$-functions: They are in the trivial character-component $\Z_p[[X]]$ of $\Lambda$ and satisfy the following relation with the classical $p$-adic $L$-function $L_p(E,\alpha,X)$:\begin{theorem}Let $L_p^\sharp(E,X) \in\Z_p[[X]], L_p^\flat(E,X) \in\Z_p[[X]]$, and $L_p(E,\alpha,X)$ be as above. Then we have $L_p(E,\alpha,X)=L_p^{\sharp}(E,X)\log_\alpha^{\sharp}(1+X)+L_p^{\flat}(E,X)\log_\alpha^{\flat}(1+X).$
\end{theorem}
Here, $\log_\alpha^{\sharp}$ and $\log_\alpha^{\flat}$ are generalizations of Pollack's half-logarithms $\log_p^\pm$, which we construct together with the Coleman maps $\col^{\sharp}$ and $\col^{\flat}$ by looking at Kurihara's $P_n$-pairing (see \cite{kurihara}): Denote by $T$ the $p$-adic Tate module. From Honda's theory of formal groups, there is a system of elements $(c_n)_n, c_n\in E(k_n)\otimes \Z_p \subseteq H^1(k_n,T)$ satisfying $\Tr_{n+1/n}(c_{n+1})=a_pc_n-c_{n-1}$. By using the essential assumption $a_p=0$, Kobayashi proved that the image of $P_{n,c_{n-1}}: H^1(k_n,T)\rightarrow \Lambda_n$ vanished at all $X=\zeta_{p^m}-1$ for $m\leq n$ with the same parity as $n$. In this way, he recovered the zeros of $\log_p^\pm$ (the sign depends on the parity of $n$). Dividing the image of $P_{n,c_{n-1}}$ by the minimal polynomial of these zeroes, he constructed maps  $\col_n^\pm$ whose inverse limits would be $\col^\pm$.

To treat the excluded case $a_p\neq0$, these methods do not have to be modified quite that significantly. In Section \ref{generatorsection}, we define certain linear combinations $\delta_n^i$ of $c_n$ and $c_{n-1}$. These $\delta_n^i$ are periodic with respect to $i$, and if one ignores the sign, the periods are $2p$ for $a_p\neq0$ and $2$ for $a_p=0$ (for $a_p=0$, we just have $\delta_n^i=\pm c_n$ or $=\pm c_{n-1}$). Kobayashi's key accomplishment was to exhibit new zeroes (the primitive $p^n$th roots of unity) for $P_{n,c_{n-1}}$ each time $n$ was increased by $2$. If we followed his method na\"{i}vely for $a_p \neq 0$, the periodicity of $\delta_n^i$ would only allow us to discover new zeroes each time $n$ increased by $2p$, which is too few, since the image would then not have the right growth properties. \textit{Rather than relying on any periodicity}, we let linear algebra come to the rescue. We look at certain linear combinations of $P_{n,\delta_n^{i+1}}$ and $P_{n,\delta_n^{i}}$ and show their images vanish at $X=\zeta_{p^n}-1$. In this way, we are able to write the function $(P_{n,c_n}, P_{n,c_{n-1}}): H^1(k_n,T)\rightarrow \Lambda_n^{\oplus 2}$ as a product of a function $\col_n$ times a product $\LL_n$ involving $n$ matrices of the form $\links a_p& \Phi_m \\ -1 & 0\rechts$. Then we prove that $(\col^{\sharp}, \col^{\flat})=\displaystyle\varprojlim_n \col_n$ is a well-defined map from $\mathbf{H}^1_{\Iw}=\displaystyle\varprojlim_n H^1(k_n,T)$ to $\Lambda^{\oplus 2}$.

The outline of the paper is as follows: In Section \ref{generatorsection}, we construct the generators $\delta_n^i$ and prove some basic properties. In Section \ref{constructionofthetaupsilon}, we construct the product of matrices $\LL_n$ that we just mentioned, and relate it to the pair of functions  $(P_{n,c_n}, P_{n,c_{n-1}})$. Then in Section \ref{growthproperties}, we construct a limit matrix $\LL$ and prove that its entries are in $O(\log_p(1+X)^{\frac{1}{2}})$, as are Pollack's half-logarithms $\log_p^\pm(1+X)$. In Section \ref{colemanmaps}, we construct the Coleman maps $\col^{\sharp}$ and $\col^{\flat}$ and in Section \ref{padiclfunctions}, we construct $\log_\alpha^{\sharp}$ and $\log_\alpha^{\flat}$ from $\LL$ and prove our main theorem. 

In Section \ref{mainconjecture}, we give a formulation of a pair of main conjectures for \textit{any} odd supersingular prime, each in the spirit of Mazur's. Each is equivalent Kato's \cite{kato}, Perrin-Riou's \cite{perrinriou}, or Kurihara's \cite{kurihara} main conjecture. Let $\gp$ be the prime ideal of $\Q_\infty$ above $p$, and
\[\Sel^{\sharp}(E/\Q_\infty):=\ker\left(\Sel(E/\Q_\infty)\longrightarrow\frac{E(\Q_{\infty,\gp})\otimes\Q_p/\Z_p}{E^{\sharp}_{\infty,\gp}}\right),\]
where the local condition $E_{\infty,\gp}^{\sharp}$ is the exact annihilator under the local Tate pairing of $\ker \col^{\sharp}$, and similarly define $\Sel^{\flat}(E/\Q_\infty)$. The idea of choosing a new local condition to get amenable modified Selmer groups goes back to Kurihara, who chose the trivial group for his main conjecture, but did not have an Iwasawa function yet for the analytic side. Now at least one of $L_p^{\flat}(E,X)$ and $L_p^{\sharp}(E,X)$ is nonzero, and conjecturally both are. The theorem below is the key to our main conjecture:\begin{theorem} Choose $*\in\{\sharp,\flat\}$ so that $L_p^*(E,X)$ is nonzero. The Pontryagin dual $\X^*(E/\Q_\infty)=\Hom(\Sel^*(E/\Q_\infty),\Q_p/\Z_p)$ of this $*$-Selmer group is a finitely generated torsion $\Z_p[[X]]$-module. \end{theorem}
\begin{mainconjecture}
Let $p$ be odd, and $*\in\{\sharp,\flat\}$ so that $L_p^*(E,X)$ is nonzero. The characteristic ideal of the Pontryagin dual of $\Sel^*(E/\Q_\infty)$ is then generated by the $p$-adic $L$-function $L_p^*(E,X)$:
\[\Char(\X^*(E/\Q_\infty))=(L_p^*(E,X)).\]
\end{mainconjecture}

We then use a theorem of Kato concerning his Euler systems to prove the following theorem:

\begin{theorem} Suppose $E/\Q$ does not have complex multiplication. Choose $*\in\{\sharp,\flat\}$ so that $L_p^*(E,X)$ is nonzero. Then if the $p$-adic representation $\Gal(\overline{\Q}/\Q)\rightarrow \GL_{\Z_p}(T)$ on the automorphism group of the $p$-adic Tate module $T$ is surjective, we have 
\[\Char(\X^*(E/\Q_\infty))\supseteq(L_p^*(E,X)).\]
\end{theorem}

The statements here correspond to the $\eta=1$ case in the more precise statements of Section \ref{mainconjecture}. In fact, we will work with the Iwasawa algebra $\Z_p[\Delta][[X]]$ rather than $\Z_p[[X]]$. The smart reader will notice that some of our methods can (almost word-for-word) be applied to the setting of \cite{iovitapollack}, where the authors work with a number field for which $p$ (assumed to be odd) splits completely. Since the applications of the results there are however not generalizable to the case $a_p\neq0$ at present and we included the prime $p=2$ in our paper, we have decided to stick with Kobayashi's setting for convenience of the reader and simplicity. 

\end{section}

\begin{section}{The generators $\delta_n^i$}\label{generatorsection}$\text{\bf{Notation. }}$ 
We attempt to keep the notation of \cite{kobayashisthesis}. Thus, $E$ is an elliptic curve over $\Q$ and $p$ a prime so that $E$ has good supersingular reduction at $p$, $[\frac{a}{b}]$ is the greatest integer not greater than $\frac{a}{b}$, $I$ the identity matrix, and $\Phi_n(X)=\displaystyle \sum_{t=0}^{p-1} X^{p^{n-1}t}$ the $p^n$th cyclotomic polynomial, which is the irreducible polynomial for any primitive $p^n$th root of unity $\zeta_{p^n}$. 
Let $N=n+1$ if $p$ is odd, and $N=n+2$ if $p=2$. Put $k_n=\Q_p(\zeta_{p^N})$, and let $\G_n=\Gal(\Q(\zeta_{p^N})/\Q)\cong\Gal(k_n/\Q)$. We then have a decomposition $\G_n\cong (\Z/{p^N\Z})^\times \cong\Delta \times \Gamma_n$, where $\Gamma_n\cong \Z/p^n\Z,$ and $\Delta\cong \Z/(p-1)\Z$ if $p$ is odd. For $p=2, \Delta=\{\pm1\}\subset \G_n$.
 Taking inverse limits, put $\G_\infty= \displaystyle\varprojlim_n \G_n \cong \Delta \times \Gamma$, where $\Gamma\cong \Z_p$. Now fix a topological generator $\gamma$ of $\Gamma$. By sending $\gamma$ to $(1+X)$, we can identify $\Lambda=\Z_p[[\G_\infty]]$ with $\Z_p[\Delta][[X]]$.
 Denoting the image of $\gamma$ under the projection $\G_\infty \rightarrow \G_n$ by $\gamma_n$, we can similarly identify $\Lambda_n=\Z_p[\G_n]$ with $\Z_p[\Delta][[X]]/(\omega_n(X))$, where $\omega_n(X)= (1+X)^{p^n}-1$. See \cite[Chapter 7]{washington}.
 We denote by $\gm_n$ the maximal ideal of $\Z_p[\zeta_{p^N}]$.
 \begin{definition}\[A:=\links a_p & p \\ -1 & 0 \rechts.\]
 \end{definition}

Kobayashi constructed a power series $\log_{\F}(X) \in \Z_p[[X]]$ that can be interpreted as the logarithm of a formal group $\F$ isomorphic to the formal group $\hat{E}$ of the elliptic curve via Honda theory \cite{honda}. Although Kobayashi assumed $a_p=0$, Pollack has pointed out  in \cite{pollack} that this could be done for $a_p \neq 0$ as well. Both \cite{kobayashisthesis} and \cite{pollack} also assume $p$ is odd, but Honda's theorems hold for $p=2$ as well \cite{honda}, so we make no special assumption on $a_p$ or $p$ in this paper until Section \ref{mainconjecture}. We denote the trace map from $\F(\gm_{n+1})$ to $\F(\gm_n)$ by $\Tr_{n+1/n}$. The power series $\log_{\F}$ can be used to show the following result of Kobayashi:

\theorem\label{tracerelations} There exist $c_n\in\F(\gm_n)\cong\hat{E}(\gm_n)$ so that as a $\Z_p[\G_n]$-module, $\F(\gm_n)$ is generated by $c_n$ and $c_{n-1}$ when $n\geq 0$. $\F(\gm_{-1})$ is generated by $c_{-1}$ when $p$ is odd, and $\F(\gm_{-2})=\F(\gm_{-1})$ by $c_{-2}$ when $p=2$. They satisfy the relations:
\begin{description}
\item{(1)} $\Tr_{n+1/n} c_{n+1}=a_pc_n-c_{n-1} \text{ if $n\geq0$},$
\item{(2)} $\Tr_{0/{-1}}c_0=(a_p-2)c_{-1} \text { when $p$ is odd,}$
\item{(2')} $\Tr_{0/{-1}}c_0=(\frac{a_p}{p}-1)c_{-1}+2c_{-2} \text { when $p=2$.}$
\end{description}
\begin{proof}
This is essentially \cite[Proposition 8.12]{kobayashisthesis} and \cite[Lemma 8.9]{kobayashisthesis}. See also the discussion in \cite[Theorem 3.1]{pollack} for the cyclotomic $\Z_p$-extension of $\Q_p$ with odd $p$. To allow arbitrary supersingular primes, we must modify Kobayashi's arguments, but not too much: What we need is a formal group whose logarithm is of Honda type $t^2-a_pt+p$. We thus look at the sequence $\{x_k\}$ given by
\[(x_k,x_{k-1}):=(1,0)A^k\times\frac{1}{p^k} \text{ for $k\geq 0$}.\] 
The logarithm giving rise to our formal group via Honda theory (\cite[Theorem 8.3 iii)]{kobayashisthesis}) is then the power series 
\[\log_{\F}(X)=\sum_{k=0}^\infty x_k((1+X)^{p^k}-1).\]
Note that to make Kobayshi's arguments work, we choose $\e \in p\Z$ so that log$_{\F}(\e)=\frac{p}{p+1-a_p}$. Addition in our formal group then allows us to construct $c_n = \e[+]_{\F}(\zeta_{p^N}-1)$.
\end{proof}
\begin{lemma}\label{noptorsion}
$\F(k_n)$ has no $p$-torsion.
\end{lemma}
\begin{proof}
Multiplication by $p$ is given by a power series $[p](X)=pX+\textit{(higher order terms)}{\in\Z_p[[X]]}$ on the formal group . We have $[p](X)=f(X)u(X)$ for a distinguished polynomial $f(X)$ of degree $p^2$ and a unit $u(X)$ by the $p$-adic Weierstra{\ss} preparation theorem. Now $\frac{f(X)}{X}$ has constant term $p\times \textit{(unit)}$, and is thus an Eisenstein polynomial of degree $p^2-1$. It follows that the nontrivial torsion points live in a totally ramified extension of degree $p^2-1$, which does not divide $\deg(k_n/\Q_p)$: For odd $p$, this degree is $p^{n+1}-p^n$, while for $p=2$, we have $\deg(k_n/\Q_p)=p^{n+1}$. Thus the only $p$-torsion point in $\F(k_n)$ is zero itself.
\end{proof}
\rm For the case $a_p=0$, Kobayashi used a certain trace-compatibility of these generators \cite[Lemma 8.9]{kobayashisthesis} to define elements $c_n^\pm\in \F(\gm_n)$, from which he was able to reconstruct Pollack's $L_p^\pm$-functions. We will now give a more general construction that includes the case  $a_p\neq0$. 

\begin{definition}\label{bestdefinition}
Let $i\in\Z$. Put
\[B_i:=
B_+=\links \frac{1}{p} & 0 \\ 0 & 1 \rechts \text{ if $i$ is even, and }  B_i:=
B_-=\links1 & 0 \\ 0 &  \frac{1}{p} \rechts \text{ if $i$ is odd, and let } Y_0:=I. 
\]
For general $i\in\Z$, we define $Y_i$ by putting $AY_{i-1}B_i=Y_i$ and using two-way induction. Equivalently, $Y_i:=A^iB_1...B_i$ for $i>0$ and $Y_i:=A^iB_{0}^{-1}B_{-1}^{-1}...B_{i+1}^{-1}$ for $i<0.$ We prove below (Integrality Lemma \ref{matrixcalculation}) that $Y_i$ has integral entries. Now put 
\[(\delta_n^{i+1},\delta_n^i):=(c_n,c_{n-1})Y_i \text{ for any $i\in\Z$}.\]

\end{definition}
For the reader's convenience, we include a table of the $\delta_n^i$s. \[
\begin{tabular}{|c||r@{}c@{}r|r@{}c@{}r|r@{}c@{}l|r@{}c@{}l|r@{}c@{}r|}
\hline
\multicolumn{1}{|c||}{}& \multicolumn{3}{c|}{$a_p=2$}& \multicolumn{3}{c|}{$a_p=-2$} &\multicolumn{3}{c|} {$a_p=3$} & \multicolumn{3}{c|}{$a_p=-3$} & \multicolumn{3}{c|} {$a_p=0$} 
\\ 
\hline
$\delta_n^{-1}$ & $-c_n$   &$+$&$c_{n-1}$ & $-c_n$&$-$&$c_{n-1}$  & $-c_n$&$+$&$c_{n-1}$   & $-c_n$&$-$&$c_{n-1}$& $-c_n$&&\\ \hline
$\delta_n^0$ &                &$$&$c_{n-1}$ &          &$$&$c_{n-1}$     & &&$c_{n-1}$& &&$c_{n-1}$&&&$c_{n-1}$\\ \hline
$\delta_n^1$ & $c_n$   &      &                   & $c_n$   &&                        & $c_n$&&                             & $c_n$&&                          & $c_n$&&\\ \hline
$\delta_n^2$ & $2c_n$ &$-$&$c_{n-1}$ & $-2c_n$&$-$&$c_{n-1}$& $3c_n$&$-$&$c_{n-1}$  & $-3c_n$&$-$&$c_{n-1}$& &$-$&$c_{n-1}$\\ \hline
$\delta_n^3$ & $c_n$   &$-$&$c_{n-1}$ & $c_n$&$+$&$c_{n-1}$  & $2c_n$&$-$&$c_{n-1}$   & $2c_n$&$+$&$c_{n-1}$& $-c_n$&&\\ \hline
$\delta_n^4$ &                &$-$&$c_{n-1}$ &          &$-$&$c_{n-1}$     & $3c_n$&$-$&$2c_{n-1}$& $-3c_n$&$-$&$2c_{n-1}$&&&$c_{n-1}$\\ \hline
$\delta_n^5$ & $-c_n$  &       &                  & $-c_n$&       &                  & $c_n$&$-$&$c_{n-1}$    & $c_n$&$+$&$c_{n-1}$   & $c_n$&&\\ \hline
$\delta_n^6$ & $-2c_n$&$+$&$c_{n-1}$& $2c_n$&$+$&$c_{n-1}$&           & $-$&$c_{n-1}$    & &$-$&$c_{n-1}$                & &$-$&$c_{n-1}$\\ \hline
$\delta_n^7$ & $-c_n$&$+$&$c_{n-1}$  & $-c_n$&$-$  &$c_{n-1}$& $-c_n$&&                          & $-c_n$&&                         & $-c_n$&&\\ \hline
$\delta_n^8$ &              &     &$c_{n-1}$  &               &        &$c_{n-1}$& $-3c_n$&$+$&$c_{n-1}$& $3c_n$&$+$&$c_{n-1}$ & &&$c_{n-1}$\\ \hline
\end{tabular}
\]
\remark 
When $a_p=0$, the $\delta_n^i$ are periodic in $i$ with period $4$, or $2$ if one ignores signs, which was essential in the work of \cite{kobayashisthesis}. The period is $4p$ (or $2p$ modulo signs) for $a_p\neq 0$, but in this paper, no argument needs this periodicity.

\begin{propn}Let $n\geq0$ and $i\in\Z$. Then
\[\dI = \quad\begin{cases}\quad \Tr_{n+1/n}(\delta_{n+1}^i)\text{ if $i $ is odd, and }\\
 \quad \frac{1}{p}\Tr_{n+1/n}(\delta_{n+1}^i)\text{ if $i $ is even.}\end{cases} \]
\end{propn}

Note that the sums of the upper and lower indices of $\delta_n^{i+1}$ and $\delta_{n+1}^i$ are the same. 
\begin{proof}
From theorem \ref{tracerelations}, we have $\Tr_{n+1/n}(c_{n+1})=a_pc_n-c_{n-1}$, so the trace of $yc_{n+1}+y'c_n\in\hat{E}(\gm_{n+1})$ can be computed with the matrix $A$:
\begin{equation}\label{trace}\Tr_{n+1/n}(c_{n+1},c_n)\hidari y\\ y' \migi = (c_n,c_{n-1})A\hidari y\\ y' \migi \in \hat{E}(\gm_n).\end{equation}
 
\[\text{Thus }(\delta_n^{i+1},\delta_n^i)=(c_n,c_{n-1})AY_{i-1}B_i=\Tr_{n+1/n}(c_{n+1},c_n)Y_{i-1}B_i= \Tr_{n+1/n}(\delta_{n+1}^i, \delta_{n+1}^{i-1})B_i .\] 
\end{proof}


\begin{lemma}\label{triangle}
\[Y_i=\quad
\begin{cases}\quad
 Y_{i-1}B_+A \text{ if $i$ is even,}\\\quad
Y_{i-1}AB_-\text{ if $i$ is odd.}
\end{cases}\]
\end{lemma}
\begin{proof} We do the proof for odd $i$ by two-way induction (if necessary, just read backwards): 

$Y_i=Y_{i-1}AB_-\iff  Y_{i+1}=AY_iB_+=AY_{i-1}AB_-B_+\iff Y_{i+1}=AY_{i-1}B_-B_+A=Y_iB_+A$
\end{proof}

\begin{lemma} \label{bettertriangle}$Y_i=p^{-\frac{i}{2}} \times A^i$ for i even, and $Y_iB_+A=p^{-\frac{i+1}{2}}\times A^{i+1}$ for $i$ odd.
\end{lemma}
\begin{proof}Two-way induction as in the above proof. Note that $(AB_-B_+A)=p^{-1}\times A^2$.
\end{proof}

\begin{lemma}[Integrality Lemma]\label{matrixcalculation}
$Y_i \in \GL_2(\Z)$, i.e. $Y_i$ has integral coefficients and is invertible.
\end{lemma}

\begin{proof}
Since $\frac{a_p}{p}\in \Z,$ this is true for $i=\pm1$. Now note that $A^2$ is an element of $GL_2(\Z)$ multiplied by $p$. Now use Lemma \ref{bettertriangle} and Lemma \ref{triangle}, noting that $AB_-\in GL_2(\Z)$ and $\det(Y_i)=\pm1$.
\end{proof}

\begin{corollary}\label{twogenerators}
$\dI$ and $\dn$ generate $\hat{E}(\gm_n)$ as a $\Z_p[\G_n]-$module for $n\geq 0$.
\end{corollary}
\begin{proof}This follows from $c_n$ and $c_{n-1}$ being generators for $\hat{E}(\gm_n)$ and $\det(Y_i)=\pm1$.
\end{proof}


\end{section}

\begin{section}{Construction of the zero-finding matrices $\LL_n(X)$}\label{constructionofthetaupsilon}
\begin{definition}\label{kuriharanoPn}

Let $P_{n,x}: H_1(k_n, T) \rightarrow \Z_p[\G_n]$ be defined by

     \[z \mapsto \sum_{\sigma \in \G_n} (x^{\sigma},z)_n\sigma \hspace{5mm}\text{for }x\in \F(\gm_n),\]
where $(\:,\:)_n: \F(\gm_n)\times H^1(k_n,T)\rightarrow H^2(k_n,\Z_p(1))\cong\Z_p$ is the pairing coming from the cup product. Here, we have put $\F(\gm_n)\subseteq H^1(k_n,T)$ (see \cite[Section 8.5]{kobayashisthesis}).

\end{definition}

\begin{definition}\label{defofP}
We let $P_n^i:=P_{n,\delta_n^i}$. \end{definition}
By linearity, the periodicity of $\delta_n^i$ carries over to $P_n^i$, and we also have 
\begin{equation}\label{PY}(P_n^{i+1},P_n^i)=(P_n^1,P_n^0)Y_i.\end{equation}


The main result of this section is the following proposition.
\begin{propn}\label{mainresultofthissection}Let $z\in H^1(k_n,T)$. Then for some $f_{\sharp}'(z), f_{\flat}'(z)\in \Lambda_n$,
\[(P_n^1(z),P_n^0(z))=(f_{\sharp}'(z), f_{\flat}'(z))\CC_1\cdots\CC_n, \text{ where } \CC_i=\CC_i(X):=\links a_p & \Phi_i(1+X) \\ -1 & 0 \rechts.\]
\end{propn}

The proof of this proposition will occupy the rest of this section. 
\begin{observation}\label{observation}Let $k<m$ be integers. Since $\Phi_m(\zeta_{p^k})=1+(\zeta_{p^k})^{p^{m-1}}+...+(\zeta_{p^k})^{p^{m-1}(p-1)}=p$, we have\[\CC_m(\zeta_{p^k}-1)=\links a_p & p \\ -1 & 0\rechts=A.\]
\end{observation}


\begin{lemma}[Tandem Lemma]\label{importantlemma}Fix an integer $n\geq 0$. Assume that for any $i \in \N$, we are given functions $Q_i=Q_i(X)$ so that $Q_i\in \Phi_i(1+X)\Lambda_n$ whenever $i\leq n$, and $(Q_{n+1}, Q_n)Y_{n'-n}=(Q_{n'+1},Q_{n'})$ for any $n'\in\N$.

Then $(Q_{n+1},Q_n)=(\tilde{q}_1,q_0)\CC_1\CC_2\cdots\CC_n$ with $\tilde{q}_1=\tilde{q}_1(X)\in \Lambda_n, q_0=q_0(X) \in \Lambda_n$.
\end{lemma}

\begin{proof}
We prove that $(Q_{n+1},Q_n)=(\tilde{q}_{n-l+1},q_{n-l})\CC_{n-l+1}\CC_{n-l+2}\cdots\CC_{n}$ for $\tilde{q}_{n-l+1},q_{n-l} \in \Lambda_n$ by induction on $l$. For $l=0$, we put $Q_{n+1}=\tilde{q}_{n+1}, Q_n=q_n$. Now assume this holds for $l\geq0$. We first show $\Phi_{n-l}(1+X)|q_{n-l}(X)$: By Observation \ref{observation} and the inductive hypothesis, 
\[(Q_{n+1},Q_n)=(\tilde{q}_{n+1-l},q_{n-l})A^l \text{ at }X=\zeta_{p^{n-l}}-1,\text
{ and }\] 
\[(Q_{n+1},Q_n)Y_{-l}=(Q_{n+1-l},Q_{n-l}) \text{ by assumption.}\]
But $Q_{n-l}(\zeta_{p^{n-l}}-1)=0,$ so $(\tilde{q}_{n+1-l},q_{n-l})A^lY_{-l}=(Q_{n+1-l},0)$ at $X=\zeta_{p^{n-l}}-1$. 

From Corollary \ref{bettertriangle},
$A^lY_{-l}=p^{\frac{l-1}{2}}\times B_+^{-1}\text{ for $l$ odd,}\text{ and }A^lY_{-l}=p^{\frac{l}{2}}\times I\text{ for $l$ even}.$

Therefore, $q_{n-l}\times (\text{some power of }p)=0$ at $\zeta_{p^{n-l}}-1$. But $\Z_p[\Delta][\zeta_{p^{n-l}}]$ has no $p$-torsion, so we have ${q_{n-l}(\zeta_{p^{n-l}}-1)=0}$. Thus, we can indeed write $q_{n-l}(X)=\Phi_{n-l}(1+X)\tilde{q}_{n-l}(X)$. Since $AB_-\in GL_2(\Z)$, we now define
\[(\tilde{q}_{n-l},q_{n-1-l}):=(\tilde{q}_{n-l+1},\tilde{q}_{n-l})(AB_-)^{-1}.\]
Then $(\tilde{q}_{n-l}, q_{n-l-1})\CC_{n-l}=(\tilde{q}_{n-l},q_{n-l-1})AB_-\links 1 & 0 \\ 0 & \Phi_{n-l}\rechts=(\tilde{q}_{n-l+1},\tilde{q}_{n-l})\links 1 & 0 \\ 0 & \Phi_{n-l}\rechts = (\tilde{q}_{n-l+1},q_{n-l})$. Thus, $(Q_{n+1},Q_n)=(\tilde{q}_{n-l},q_{n-l-1})\CC_{n-l}\cdots\CC_n$, as desired.
\end{proof}

\begin{lemma}\label{zerolemma}
Let $i\leq n$ with $i> 0$. Then $\im(P_n^{i-n}) \subset\Phi_i(1+X) \Lambda_n$. \end{lemma}

\begin{proof}
Consider the ring morphism
\[\xymatrix{\Lambda_i=\Z_p[\Delta][X]/(\omega_i(X))=\Z_p[\G_i]\ar[r]^{\hspace{25mm} \prod\psi}& \prod_\psi\overline{\Q_p}},\]
where $\G_i\longrightarrow^{\hspace{-4mm} \psi} \hspace{3mm}\overline{\Q_p}^\times$ are all the characters of $\G_i$ of conductor $p^{i+1}$ (or $2^{i+2}$ if $p=2$), so that
$\psi(\gamma_i)$ is a primitive $p^i$th root of unity, and thus  $\ker\prod\psi= \Phi_i(1+X)\Lambda_i$.

Denote by $\overline{\sigma}$ the image of $\sigma$ by the natural projection $\G_i\rightarrow\G_{i-1}$. Since $\delta_i^0=c_{i-1}=\delta_{i-1}^1$,
\[ \psi \circ P_i^0(z)=\sum_{\sigma \in \G_i}((\delta_{i}^0)^{\sigma},z)_i \psi(\sigma) =\sum_{\tau \in \G_{i-1}}((\delta_{i-1}^1)^{\tau},z)_i \sum_{\sigma \in \G_i, \overline{\sigma}\equiv \tau} \psi(\sigma)=0,\]
so $\im(P_i^0)\subset \ker\psi=\Phi_i(1+X)\Lambda_i$.

From \cite[Lemma 8.15]{kobayashisthesis} and the definition of the $\dn$, 
\[\xymatrix {H^1(k_n,T)\ar[r]^{P_n^{i}}\ar[d]^{\cor}&\Lambda_n\ar[d]^{\proj}\\
H^1(k_{n-1},T)\ar[r]^{\hspace{3mm}p^*P_{n-1}^{i+1}}&\Lambda_{n-1}}
\]
commutes, where $p^*=1$ for odd $i$ and $p^*=p$ for even $i$. The following thus commutes as well:
\[\xymatrix {H^1(k_n,T)\ar[r]^{\hspace{5mm}  P_n^{i-n}}\ar[d]^{\cor}&\Lambda_n\ar[d]^{\proj}\\
H^1(k_{i},T)\ar[r]^{p^*P_i^0}&\Phi_i(1+X)\Lambda_i,}
\]
for an appropriate $p-$power $p^*$. This implies that $P_n^{i-n}$ maps into $\Phi_i(1+X)\Lambda_n$.
\end{proof}

\begin{proof}[Proof of Proposition \ref{mainresultofthissection}]For any $i\in \N$, put $Q_i:=P_n^{i-n}(z)$ and apply the Tandem Lemma \ref{importantlemma}. We know that $Q_i \in \Phi_i(1+X)\Lambda_n$ from Lemma \ref{zerolemma} for $i\leq n$, so by equation (\ref{PY}), 
\[ (Q_{n+1},Q_n)Y_{n'-n}=(P_n^1(z),P_n^0(z))Y_{n'-n}=(P_n^{n'-n+1}(z),P_n^{n'-n}(z))=(Q_{n'+1},Q_{n'}).\]
\end{proof}

\begin{notation}We define matrices $\LL_n$ with entries in $\Lambda_n$ by \[\LL_n=\LL_n(X):=\Aux\CC_1...\CC_n,
\text{ where }\Aux:=-(AB_-)^{-1}=\links 0 & 1 \\ -1 & -a_p \rechts.\]
\end{notation}

\begin{definition}We now put $(f_\sharp(z), f_\flat(z)):=(f_\sharp'(z),f_\flat'(z))\Aux^{-1}$. This might seem unnatural at first, since in Sections \ref{growthproperties}, \ref{colemanmaps}, and  \ref{padiclfunctions}, there are no problems if you formally replace $(f_\sharp(z),f_\flat(z))$ by $(f_\sharp'(z),f_\flat'(z))$ while ignoring $\Aux$. We need this definition to match a sign convention of Kobayashi (Remark \ref{comparison}), and because we need this setup in Section \ref{mainconjecture}. The main proposition then becomes:
\end{definition}

\begin{propn}For $z\in H^1(k_n,T)$, there are $(f_\sharp(z),f_\flat(z))\in\Lambda_n^{\oplus2}$ so that we have \[(P_n^1(z),P_n^0(z))=(f_\sharp(z),f_\flat(z))\LL_n.\]
\end{propn}
\end{section}

\begin{section}{Construction and Growth Properties of $\LL$}\label{growthproperties}

We now scrutinize the growth properties of $\LL_n$:

\begin{definition}
Fix $0<r<1$. For $f(X)\in \C_p[[X]]$ convergent on the open unit disc of $\C_p$, let  
\[|f(X)|_r:=\sup_{|z|_p<r}|f(z)|_p \]
with normalization $|p|_p=\frac{1}{p}$, and for a matrix with such entries, define its norm $\left| \quad \right|_r$ by
\[\left|\links a&b\\c&d\rechts\right|_r:=\max\{|a|_r,|b|_r,|c|_r,|d|_r\}.\]
\end{definition}

\begin{example}\label{basicexample}\footnote{This appears in the proof of \cite[Lemma 4.5]{pollacksthesis}. There seems to be a typo. He meant to write ${|\Phi_n(1+X)|_r\sim r^{p^{n-1}(p-1)}}$.}\[\left|\Phi_n(1+X)\right|_r\quad=\quad\begin{cases}\frac{1}{p} &\text{when }   r\leq p^{-\frac{1}{p^{n-1}(p-1)}},\\ r^{p^{n-1}(p-1)}&\text{when }  r \geq p^{-\frac{1}{p^{n-1}(p-1)}}.\end{cases}\]
\end{example}

\begin{example}\label{matrixexample}$|\frac{1}{p}\CC_n(X) \CC_{n+1}(X)|_r=|\frac{\Phi_n(1+X)}{p}|_r=1$ for $n$ big enough so that $p^n\geq \frac{p}{p-1}\frac{\log p}{\log {r^{-1}}}$.
\end{example}
\begin{proof}For appropriately big $n$,
\[\left|\frac{1}{p}\CC_n\CC_{n+1}\right|_r=\left|\links \frac{a_p^2-\Phi_n(1+X)}{p}& \frac{a_p\Phi_{n+1}(1+X)}{p}\\ -\frac{a_p}{p} & -\frac{\Phi_{n+1}(1+X)}{p}\rechts \right|_r=\left|\frac{\Phi_n(1+X)}{p}\right|_r=1.\] \end{proof}

\begin{lemma}[Convergence Lemma]
$\displaystyle\lim_{n\to\infty}\LL_nA^{-n}$ exists and converges on the open unit disc of $\C_p$.
\end{lemma}
\begin{proof}
Using the norm $\left|\:\:\right|_r$, \[R:=\left|\LL_nA^{-n}-\LL_{n+1}A^{-(n+1)}\right|_r=\left|\LL_nA^{-n}-\LL_n\CC_{n+1}A^{-(n+1)}\right|_r\] satisfies \[R\leq\left|p^{-\frac{n}{2}}\LL_n\right|_r\left|p^{\frac{n}{2}}(A^{-n}-\CC_{n+1}A^{-(n+1)})\right|_r\leq\left|p^{-\frac{n}{2}}\LL_n\right|_r\left|I-\CC_{n+1}A^{-1}\right|_r\left|p^{\frac{n}{2}}A^{-n}\right|_r.\]

As $n\rightarrow \infty$, the first term $\left|p^{-\frac{n}{2}}\LL_n\right|_r$ is bounded by Example \ref{matrixexample}, and analogous calculations as in its proof yield $\left|pA^{-2}\right|_r=1$, so the last term $\left|p^{\frac{n}{2}}A^{-n}\right|_r$ is bounded, too. Finally, from \cite[proof of Lemma 4.1]{pollacksthesis}, we have $\left|\frac{\Phi_{n+1}(1+X)}{p}-1\right|_r\rightarrow 0$ as $n\rightarrow \infty$, so the middle term $\left|I-\CC_{n+1}A^{-1}\right|_r\rightarrow 0$ as $n \rightarrow \infty$. It follows that $\left|\LL_nA^{-n}-\LL_{n+1}A^{-(n+1)}\right|_r\rightarrow 0$ as $n \rightarrow \infty$, whence the lemma.\end{proof}

\begin{definition}\label{defofLL}
Recall that $N=n+1$ for $p$ odd and $N=n+2$ for $p$ even. Now let us put 
\[ \LL:=\lim_{n\rightarrow \infty}\LL_nA^{-N}.\]
\end{definition}

\begin{definition}Let $f(X), g(X)\in \C_p[[X]]$ converge on the open unit disc of $\C_p$. Then we say that $f(X)$ is $O(g(X))$ if \[\left|f(X)\right|_r\text{  is }O(\left|g(X)\right|_r)\text{  as }r\rightarrow 1^-.\]
If in addition, $g(X)$ is $O(f(X))$, then we say that $f(X)\sim g(X)$.
\end{definition}

\begin{example}\label{growthexample}$1\sim X$. Since $\log_p(1+X)=X\displaystyle\prod_n\frac{\det\CC_n}{\det A}$, we have $\det \LL\sim \log_p(1+X)$.
\end{example}
\begin{lemma}[Growth Lemma]\label{growthlemma}The entries of $\LL$ are $O(\log_p(1+X)^{\frac{1}{2}})$.
\end{lemma}
\begin{proof} We give the proof for $N=n+1$, since it is very similar for $N=n+2$.
Note that
\[\LL_nA^{-N}=\Aux\CC_1...\CC_nA^{-N}=\begin{cases}\Aux\CC_1...\CC_nY_{-N}\times p^{-\frac{n+1}{2}}&\text{ for $n$ odd,}\\\Aux\CC_1...\CC_nY_{-N}B_+\times p^{-\frac{n}{2}}&\text{ for $n$ even (cf. Corollary \ref{bettertriangle}).}\end{cases}\]
\[\text{Thus, }\left|\LL_nA^{-N}\right|_r\leq\begin{cases}\left|\Aux\CC_1\right|_r\left|\frac{1}{p}\CC_2\CC_3\right|_r...\left|\frac{1}{p}\CC_{n-1}\CC_n\right|_r\left|Y_{-N}\times p^{-1}\right|_r &\text{ for $n$ odd,}\\\left|\Aux\right|_r\left|\frac{1}{p}\CC_1\CC_2\right|_r...\left|\frac{1}{p}\CC_{n-1}\CC_n\right|_r\left|Y_{-N}B_+\right|_r &\text{ for $n$ even.}\end{cases}\]

From Example \ref{matrixexample}, $\left|\frac{1}{p}\CC_k\CC_{k+1}\right|_r=\left|\frac{\Phi_k(1+X)}{p}\right|_r$, so \[\left|\LL_nA^{-N}\right|_r\leq{\left|\displaystyle\prod_{1\leq k < n, k\not\equiv n(2)}\frac{\Phi_k(1+X)}{p}\right|_r\times c}\] for some constant $c$ independent from $r$. From \cite[Lemma 4.5]{pollacksthesis}, we have  
\[\log_p(1+X)^{\frac{1}{2}} \sim \frac{1}{p}\prod_{k \text{ odd}} \frac{\Phi_k(1+X)}{p} \sim \frac{1}{p} \prod_{k \text{ even}} \frac{\Phi_k(1+X)}{p},\]
so the entries of $\LL$ are all $O(\log_p(1+X)^{\frac{1}{2}})$, as desired.
\end{proof}
\end{section}

\begin{section}{Construction of the Coleman maps}\label{colemanmaps}
In this section, we construct a map $\col: \mathbf{H}^{1}_{\Iw}(T) \rightarrow \Lambda \oplus \Lambda$ , where $\mathbf{H}^{1}_{\Iw}(T)=\displaystyle\varprojlim_n H^1(k_n,T)$.
\begin{definition}Let $\llll_n^i$ be the $\lord$-module morphisms given by
\[\begin{array}{cccccccc}
 \Lambda_n \oplus  \Lambda_n& \stackrel{\llll_n^i}{\longrightarrow}& (\Lambda_n \oplus \Lambda_n)\LL_nY_i\subset &\Lambda_n \oplus \Lambda_n.
 \\
(a  , b)& \longmapsto & (a,b)\LL_nY_i
\end{array}\]
\end{definition}

\begin{observation}\label{independenceofi}$\ker \llll_n^i$ is independent from $i$, and we henceforth denote it by $\ker \llll_n$.
\end{observation}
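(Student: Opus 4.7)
The strategy is to prove the local statement $\ker\phi_n^{i+1}=\ker\phi_n^i$ for each $i$; iterating in $i$ then gives the full independence claim. Writing $v_j:=(\Upsilon_n^j,-\Theta_n^j)\in\Lambda_n\oplus\Lambda_n$ for brevity, the Kernel Lemma identifies $\ker\phi_n^i=X\Lambda_n v_i+X\Lambda_n v_{i-1}$, so it suffices to show that the pairs $\{v_{i+1},v_i\}$ and $\{v_i,v_{i-1}\}$ generate the same $\Lambda_n$-submodule of $\Lambda_n\oplus\Lambda_n$.

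Reading off the top row of the matrix identity (\ref{transformation}) supplies the recursion $v_{i+1}=a_p(i)\,v_i-v_{i-1}$, which rearranges to $v_{i-1}=a_p(i)\,v_i-v_{i+1}$. Together these exhibit $v_{i+1}\in\Lambda_n v_i+\Lambda_n v_{i-1}$ and $v_{i-1}\in\Lambda_n v_{i+1}+\Lambda_n v_i$, which gives the desired equality of submodules --- \emph{provided} the scalar $a_p(i)$ is $p$-integral.

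The only substantive step, which I expect to be the main (if small) obstacle, is therefore to verify $a_p(i)\in\Z_p$. For even $i$ this is immediate from Definition \ref{a_p(i)} since $a_p(i)=a_p\in\Z$. For odd $i$ one has $a_p(i)=a_p/p$, and here the supersingular hypothesis in force throughout the paper must be invoked: supersingularity is exactly the condition $p\mid a_p$, so $a_p/p$ is indeed an integer. (The Hasse--Weil bound $|a_p|\leq 2\sqrt p$ combined with $p\mid a_p$ in fact forces $a_p=0$ for $p\geq 5$ and $a_p/p\in\{-1,0,1\}$ for $p\in\{2,3\}$, so the integer involved is minuscule.) With $a_p(i)\in\Z$ the transition matrix $\links a_p(i)&-1\\1&0\rechts$ lies in $GL_2(\Z)$, which is what truly drives the equality of the two $\Lambda_n$-spans; the observation follows.
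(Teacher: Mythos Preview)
Your proof is correct and is essentially the paper's own argument: the paper packages the pair $(v_i,v_{i-1})$ into the matrix $\links \Upsilon_n^i & -\Theta_n^i\\ \Upsilon_n^{i-1} & -\Theta_n^{i-1}\rechts$ and observes it equals $\links a_p(i-1)&-1\\1&0\rechts\links \Upsilon_n^{i-1} & -\Theta_n^{i-1}\\ \Upsilon_n^{i-2} & -\Theta_n^{i-2}\rechts$, so that absorbing the $GL_2(\Z)$ factor into the row vector $(Xc,Xd)$ gives $\ker\phi_n^i=\ker\phi_n^{i-1}$. Your vector form $v_{i+1}=a_p(i)v_i-v_{i-1}$ is exactly the top row of this identity, and your integrality check $a_p/p\in\Z$ was already recorded in the proof of the Coefficient Lemma~\ref{matrixcalculation}.
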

\begin{proof}Right multiplication by $Y_i$ is a $\lord$-module isomorphism, because $\det Y_i=1$.
\end{proof}

\begin{propn}\label{col}There is a unique homomorphism $\col_n$ so that the following commutes:
\[\xymatrix {H^1(k_n,T)\ar@/^8mm/@{-->}[0,2]^{\hspace{2.5mm}\exists !\col_n}\ar[r]^{(P_n^{i+1}, P_n^i)}& {\Lambda_n\oplus \Lambda_n} &\ar@{_(->}[l]_{\llll_n^i}\dfrac{\Lambda_n \oplus \Lambda_n}{\ker \llll_n}
}
\] \end{propn}

\begin{proof}For $z\in H^1(k_n,T),(P_n^{i+1}(z),P_n^i(z))=(f_\sharp(z),f_\flat(z))\LL_nY_i= \llll_n^i(f_\sharp(z),f_\flat(z))$. Thus, we can put $\col_n(z)=(f_\sharp(z),f_\flat(z))\in \frac{\Lambda_n\oplus\Lambda_n}{\ker \llll_n}$, independent from $i$.
\end{proof}


\begin{lemma} $\proj: \Lambda_{n+1}\oplus\Lambda_{n+1}\rightarrow \Lambda_n\oplus \Lambda_n$ satisfies $\proj(\ker \llll_{n+1})\subset \ker \llll_n$.
\end{lemma}
\begin{proof}For $(a,b)\in \ker \llll_{n+1}\subset \Lambda_{n+1}^{\oplus2}$, we know that $(a,b)\LL_{n+1}=(0,0)$ in $\in \Lambda_{n+1}^{\oplus2}$.

Now $\Phi_{n+1}(1+X)\equiv p \pmod{\omega_n(X)} \text{ implies }
(a,b)\LL_{n+1}\equiv(a,b)\LL_nA\equiv (0,0) \pmod{\omega_n}, \text{ so}$
 \[p(a,b)\LL_n\equiv(0,0)\pmod{\omega_n},\]since $pA^{-1}$ has integral coefficients. But $\Lambda_{n+1}/\omega_n\cong \Lambda_n$ has no $p$-torsion, so $(a,b)\LL_n \in \Lambda_n^{\oplus 2}$. Thus, $\proj\left((a,b)\right) \in \ker \llll_n$.
\end{proof}
\renewcommand{\qedsymbol}{{$\mathit{QED}$}}
Now we can define $\bb_i: \im(h_{n+1}^{i-1}) \rightarrow \im(h_n^i)$ (cf. diagram (8.27) in \cite{kobayashisthesis}) as
\[\bb_i=\begin{cases}(\proj,\frac{1}{p}\proj) & \text{ for $i$ odd,}\\(\frac{1}{p}\proj, \proj) & \text{ for $i$ even.}\end{cases}\]

\begin{propn}\label{prop2}The following diagram commutes: 
\[\xymatrix 
{H^1(k_{n+1},T)\ar[0,2]^{\hspace{-5mm}P_{n+1}^i,P_{n+1}^{i-1}}\ar[d]^{\cor}
 & & \hspace{0mm}\im (h_{n+1}^{i-1})\ar[d]^{\bb_i}
 &\dfrac{\Lambda_{n+1} \oplus \Lambda_{n+1}}{\ker \llll_{n+1}}\ar[d]^{\proj}\ar@{_(->}[l]^{\hspace{0mm}\llll_{n+1}^{i-1}}\\
H^1(k_n,T)\ar[0,2]^{\hspace{-7mm}P_n^{i+1},P_n^i}
& &\im (h_n^i)
& \dfrac{\Lambda_n \oplus \Lambda_n}{\ker \llll_n}\ar@{_(->}[l]^{\hspace{0mm}\llll_n^i}}
\]

\end{propn}
\begin{proof}As for the left square, commutativity follows from the trace compatibility of the $P_n^i$ (see \cite[Lemma 8.15]{kobayashisthesis}). The map $\bb_i$ ensures that we divide by the appropriate power of $p$ in accordance with Definition \ref{bestdefinition} of $\delta_n^i$. 

As for the right square, the map $\bb_i\circ \llll_{n+1}^{i-1}$ is given by multiplying by the matrix $\LL_{n+1}Y_{i-1}B_i \equiv \LL_nAY_{i-1}B_i \pmod{\omega_n(X)}$. But $AY_{i-1}B_i=Y_i$ by construction, so we are done.
\end{proof}

Combining Propositions \ref{col} and \ref{prop2}, we obtain the following corollary.
\begin{corollary}\label{cor} The Coleman maps are compatible:
\[\xymatrix {H^1(k_{n+1},T)\ar[r]^{\col_{n+1}}\ar[d]^{\cor}\ar@{}[dr]|\circlearrowleft& \dfrac{\Lambda_{n+1} \oplus \Lambda_{n+1}}{\ker \llll_{n+1}}\ar[d]^{\proj}\\
H^1(k_n,T)\ar[r]^{\col_n}& \dfrac{\Lambda_n \oplus \Lambda_n}{\ker \llll_n}}
\]
\end{corollary}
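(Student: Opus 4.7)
The plan is to deduce this compatibility directly from the preceding proposition, using the characterization of $Col_n$ from corollary \ref{col}: namely, for every $i$, the composition $\phi_n^{i+1}\circ Col_n$ equals $(P_n^{i+1},P_n^i)$, and $\phi_n^{i+1}$ is injective on the quotient $(\Lambda_n\oplus\Lambda_n)/\ker\phi_n$ by its very definition. So the strategy is: apply $\phi_n^{i+1}$ to both $proj\circ Col_{n+1}(z)$ and $Col_n\circ cor(z)$, show the two outputs agree by traversing the diagram of the proposition, and cancel.

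More concretely, fix $z\in H^1(k_{n+1},T)$ and an index $i$. First I would compute $\phi_n^{i+1}\circ proj\circ Col_{n+1}(z)$. By the right square of the proposition, this equals $m_i\circ\phi_{n+1}^i\circ Col_{n+1}(z)$, which by the definition of $Col_{n+1}$ equals $m_i(P_{n+1}^i(z),P_{n+1}^{i-1}(z))$. Next, by the left square of the proposition, this in turn equals $(P_n^{i+1}(cor(z)),P_n^i(cor(z)))$, which by the definition of $Col_n$ is just $\phi_n^{i+1}\circ Col_n\circ cor(z)$. Thus $\phi_n^{i+1}$ carries both $proj\circ Col_{n+1}(z)$ and $Col_n\circ cor(z)$ to the same element of $\Lambda_n\oplus\Lambda_n$.

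Finally, since by construction $\phi_n^{i+1}$ is injective on $(\Lambda_n\oplus\Lambda_n)/\ker\phi_n$, the two classes coincide, which is exactly the desired commutativity $proj\circ Col_{n+1}=Col_n\circ cor$. I would also remark that the vertical map $proj$ on the right is well-defined on the quotients by the lemma stating $proj(\ker\phi_{n+1})\subset\ker\phi_n$, so nothing is left dangling.

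I do not expect any real obstacle here: the hard analytic and combinatorial work has already been done in the kernel lemma, the lemma on $proj$, and the preceding proposition. The only minor point that needs care is bookkeeping of the index shift (the right half of the proposition pairs $\phi_{n+1}^i$ with $\phi_n^{i+1}$, not with $\phi_n^i$), and the observation that the injectivity of $\phi_n^{i+1}$ on the quotient is exactly what lets us cancel to conclude.
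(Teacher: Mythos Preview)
Your proposal is correct and is precisely the diagram chase the paper intends: the corollary is stated without proof because it follows immediately from the preceding proposition together with the defining property $\phi_n^{i+1}\circ Col_n=(P_n^{i+1},P_n^i)$ and the injectivity of $\phi_n^{i+1}$ on the quotient. Your bookkeeping of the index shift and the well-definedness of $proj$ are exactly the points that need to be checked, and you have them right.
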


\begin{propn}[Limit Proposition]\label{limitpropn}
\[ \varprojlim_{n} \frac{\Lambda_n \oplus \Lambda_n}{\ker \llll_n} \cong \Lambda \oplus \Lambda. \]
\end{propn}
\begin{proof}The map $h_n^i$ can be defined on $\Lambda^{\oplus2}$ as well, as multiplication by the matrix $\H_nY_i$. Let $\pi_n$ be the projection $\Lambda\rightarrow \Lambda_n$ and put $M_n:=\ker\pi_n^{\oplus 2}\circ h_n^i$ (which is independent of $i$, cf. Observation \ref{independenceofi}). By definition, $\omega_n(X)\Lambda^{\oplus 2}\subset M_n$, from which $\frac{\Lambda\oplus \Lambda}{M_n}\cong\frac{\Lambda_n\oplus \Lambda_n}{\ker h_n}$ as $\Lambda$-modules. Now
\[\Lambda^{\oplus 2}/M_n \cong \varprojlim_m \Lambda^{\oplus 2}/(M_n+p^m\Lambda^{\oplus2}),\]
since $M_n$ is a free finitely generated $\Z_p$-module. 
We have $M_n \supset\omega_n\Lambda^{\oplus 2}$, so $M_n+p^m\Lambda^{\oplus 2}\supset \omega_n\Lambda^{\oplus 2}+p^m\Lambda^{\oplus 2}$. Assume for the moment that we also have $M_{2m+\nu}+p^m\Lambda^{\oplus 2}\subset\omega_\nu\Lambda^{\oplus 2}+p^m\Lambda^{\oplus 2}.$ These two inclusions would induce module morphisms
\[\varprojlim_{(\nu,m)}\Lambda^{\oplus 2}/(\omega_\nu\Lambda^{\oplus 2}+p^m\Lambda^{\oplus 2})\rightarrow \varprojlim_{(n,m)}\Lambda^{\oplus 2}/(M_n+p^m\Lambda^{\oplus 2}),\]
\[ \varprojlim_{(n,m)}\Lambda^{\oplus 2}/(M_n+p^m\Lambda^{\oplus 2})\rightarrow\varprojlim_{(\nu,m)}\Lambda^{\oplus 2}/(\omega_\nu\Lambda^{\oplus 2}+p^m\Lambda^{\oplus 2}),\]
which are inverses of each other, from which $\displaystyle\varprojlim_n\Lambda^{\oplus 2}/M_n=\Lambda\oplus \Lambda$ would follow. Let us thus prove this second inclusion. It suffices to prove that $M_{2m+\nu}=0$ inside $\Lambda_\nu^{\oplus 2}/p^m$. 
\begin{lemma}$\LL_n^{-1}\times \omega_n(X)$ has coefficients in $\lord$, and $M_n=(\Lambda_n\oplus\Lambda_n)\LL_n^{-1}\times \omega_n(X)$.
\end{lemma}
\begin{proof}Choose $(a,b)\in M_n\subset\Lambda^{\oplus 2}$. This says that $(a,b)\LL_n=(c,d)\omega_n$ for some $(c,d)\in\Lambda^{\oplus 2}$. Conversely, for a fixed $(c,d)\in\Lambda^{\oplus 2}$, any such $(a,b)$ is unique, since $\det \LL_n=\frac{\omega(X)}{X}\neq 0$ in $\Z_p[X]$ , and this is not a zero-divisor in $\lord$. Thus, $(a,b)=(c,d)\LL_n^{-1}\omega_n$.
\renewcommand{\qedsymbol}{{$\mathit{QED\text{ } for\text{ } Lemma}$}}
\end{proof}
\renewcommand{\qedsymbol}{{$\mathit{QED\text{ } for\text{ } Limit\text{ } Proposition}$}}
Now $\LL_{2m+\nu}^{-1}\times\omega_{2m+\nu}\equiv(\LL_{\nu}\CC_{1+\nu}...\CC_{2m+\nu})^{-1}\omega_{\nu}\Phi_{1+\nu}...\Phi_{2m+\nu}\equiv A^{-2m}\LL_{\nu}^{-1}\omega_{\nu}\times p^{2m} \pmod{\omega_{\nu}}$ by Observation \ref{observation}. But $pA^{-2}\in\MM_2(\Z),$ so $\LL_{2m+\nu}^{-1}\times \omega_{2m+\nu}\in p^m\Lambda_{\nu}^{\oplus 2}$ and thus $M_{2m+\nu}=0$ in $\Lambda_{\nu}^{\oplus 2}/p^m$.
\end{proof}
\renewcommand{\qedsymbol}{{$\mathit{QED}$}}

In view of the Limit Proposition \ref{limitpropn} and Corollary \ref{cor}, we can make the following definition:
\begin{definition} Let the Coleman map $\col: \mathbf{H}^{1}_{\Iw}(T) \rightarrow \Lambda \oplus \Lambda$ be the projective limit of $\col_n: H^1(k_n,T)\rightarrow\dfrac{\Lambda_n \oplus \Lambda_n}{\ker h_n},\text{ where } \Lambda = \Z_p[\Delta][[X]].$ 
\end{definition}
\end{section}

\begin{section}{The $p$-adic $L$-functions $L_p^{\sharp}(E,X)$ and $L_p^{\flat}(E,X)$}\label{padiclfunctions}
Recall (e.g. from \cite[6.3]{washington}) that for a character $\eta:\Delta \rightarrow \Z_p^{\times}$, a $\Z_p[\Delta]$-module $M$, and an odd prime $p$, the $\eta$-component $M^{\eta}$ is simply given by $\varepsilon_{\eta}M$, where $\displaystyle{\varepsilon_{\eta}=\frac{1}{\#\Delta} \sum_{\tau\in \Delta} \eta(\tau)\tau^{-1}}$.

\begin{definition}For $p$ odd, let $\mathbf{z^\pm}=(z_n^\pm)_n\in\mathbf{H}^{1}_{\Iw}(T)=\displaystyle\varprojlim H^1(k_n,T)$ be Kato's zeta elements (see \cite[Theorem 5.2 i]{kobayashisthesis} and \cite[Theorem 12.5]{kato}). Write \[(L_p^{\sharp}(E,\eta,X),L_p^{\flat}(E,\eta,X))\] for the $\eta$-component of the image of $\eta(-1)\col(\mathbf{z}^{\eta(-1)})$ in $\Lambda^2,$ which we naturally view as an element of $\Z_p[[X]]^2\cong(\Lambda^\eta)^2$. If $p=2$, we have $\mathbf{z^\pm}=(z_n^\pm)_n\in\mathbf{H}^{1}_{\Iw}(T)\otimes \Q=\displaystyle\varprojlim H^1(k_n,T)\otimes \Q$, and we extend our Coleman map naturally to $\col_\Q: \mathbf{H}^{1}_{\Iw}(T)\otimes \Q\rightarrow (\Lambda\oplus \Lambda)\otimes \Q$. Now define \[(L_2^{\sharp}(E,\eta,X),L_2^{\flat}(E,\eta,X))\]as the image of $(\eta(-1)\col_\Q(\mathbf{z}^{\eta(-1)}))$ in the quotient $\Q\otimes\Z_2[[X]]^2$ of $\Q\otimes \Z_2[\Delta][[X]]^2=\Q\otimes\Lambda^{\oplus2}$  induced by the map $\Lambda\rightarrow \Z_2[[X]], g\in \Delta \mapsto \eta(g)$.
\end{definition}

Let $\psi=\eta\chi$ be a non-trivial character of $\G_n$ of conductor $N$, so that $\chi(\gamma_n)$ is a primitive $p^n$th root of unity. We also let $\alpha$ and $\overline{\alpha}$ be the two roots of $X^2-a_pX+p$. Recall that $N=n+1$ for $p$ odd, and $N=n+2$ if $p=2$. Let $L(E,\psi,s)$ be the Hasse-Weil $L$-function with Dirichlet character $\psi$, and denote by $\Omega_E^\pm$ the real and imaginary N\'{e}ron periods, obtained by integrating an invariant differential\begin{footnote}{which is defined up to multiplication by $\pm 1$. We use the same $\omega_E$ for constructing $\Omega^\pm$ and $\exp^*_{\omega_E}$.}\end{footnote}$\omega_E$. We can compare $L(E,\psi,s)$ with its $p$-adic counterpart $L_p(E,\alpha,\eta,X)$ of Mazur, Tate, and Teitelbaum originally due to \label{amicevelu}Amice and V\'{e}lu\cite{amicevelu}, and Vi\v{s}ik\cite{vishik}:

\begin{theorem}\label{amicevelu}Let the notation be as above. For the $p$-adic $L$-function $L_p(E,\alpha,\eta,X)$ of \cite{MTT},
\[\alpha^NL_p(E,\alpha,\eta,\chi(\gamma_n)-1)=\frac{p^{N}}{\tau(\overline{\psi})}\frac{L(E,\overline{\psi},1)}{\Omega_E^{\eta(-1)}}, \]
where $\tau(\overline{\psi})$ is the Gau{\ss} sum $\displaystyle \sum_{\sigma\in\G_n}\overline{\psi}(\sigma)\zeta_{p^N}^{\sigma}$.

For $\psi=1$, we have 
\[L_p(E,\alpha,0)=(1-\alpha^{-1})^2\frac{L(E,1)}{\Omega_E^+}.\]
\end{theorem}
The statement of this theorem only makes sense after fixing an embedding of an algebraic closure $\overline{\Q}$ of $\Q$ inside the completion of an algebraic closure of $\Q_p$: We fix an embedding $\overline{\Q} \rightarrow \C_p$. We also fix an embedding $\overline{\Q} \rightarrow \C$.
We would like to go further and compare both of these very classical $p$-adic $L$-functions to our $p$-adic $L$-functions $L_p^{\sharp}$ and $L_p^{\flat}$.

There is a map $\exp_{\omega_E}^*:H^1(k_n,V)\rightarrow \text{cotan}(E/k_n),$ where $V=T\otimes \Q_p$ and $\text{cotan}(E/k_n)$ is the cotangent space of $E/k_n$ at the origin (see \cite[Section 8.7]{kobayashisthesis}) with the following properties:
\begin{propn}\label{Pandlogandexp} The morphism $P_{n,x}$ in Definition \ref{kuriharanoPn} is given by
\[P_{n,x}(z)=\left(\sum_{\sigma\in\G_n}\log_{\F}(x^\sigma)\sigma\right)\left(\sum_{\sigma\in\G_n}\exp_{\omega_E}^*(z^\sigma)\sigma^{-1}\right).\]
\end{propn}
This is \cite[Proposition 8.25]{kobayashisthesis}. The following theorem is \cite[Theorem 12.5]{kato}:
\begin{theorem}[Kato]\label{kato}  With notation as above,
\[\psi\left(\sum_{\sigma\in\G_n}\exp_{\omega_E}^*(\mathbf{z}^{\eta(-1)\sigma})\sigma^{-1}\right)=\frac{L(E,\overline{\psi},1)}{\Omega_E^{\eta(-1)}}.\]
\end{theorem}

\begin{propn}\label{yilemma}Let $\zeta_{p^n}$ be any primitive $p^n$th root of unity. With notation as above,
\[\left(\alpha^NL_p(E,\alpha,\eta,\zeta_{p^n}-1),0\right)=\left(L_p^{\sharp}(E,\eta,\zeta_{p^n}-1),L_p^{\flat}(E,\eta,\zeta_{p^n}-1)\right)\LL_n(\zeta_{p^n}-1).\]
\end{propn}

\begin{proof}
Let $\pi_{\psi}:\Lambda_n\rightarrow \overline{\Q_p}$ be induced by a character $\psi:\G_n\rightarrow \overline{\Q}^{\times}\subset \overline{\Q_p}^{\times}$ which sends $X$ to $\zeta_{p^n}-1$. Then by definition, the right hand side equals \[\pi_{\psi}^{\oplus 2}\bigg(\llll_n^0\left(\eta(-1)\col_n(z_n^{\eta(-1)})\right)\bigg)=\bigg(\psi\circ P_n^1(z_n^{\eta(-1)}), \psi \circ P_n^0(z_n^{\eta(-1)})\bigg)\times \psi(-1).\]
But by Proposition \ref{Pandlogandexp},\[\psi \circ P_n^i(z_n^{\eta(-1)})=\psi\left(\displaystyle\sum_{\sigma\in\G_n}\log_{\F}(\delta_n^i)^\sigma\sigma\right)\psi\left(\displaystyle\sum_{\sigma\in\G_n}\exp_{\omega_E}^*({z_n}^{\eta(-1)\sigma})\sigma^{-1}\right), \text{ so }\]

\[\left(\psi \circ P_n^1(z_n^{\eta(-1)}), \psi \circ P_n^0(z_n^{\eta(-1)})\right)=\left(\tau(\psi)\frac{L(E,\overline{\psi},1)}{\Omega_E^{\eta(-1)}},0\right)\] by Theorem \ref{kato} and trace computations, which equals

\[=\left(\frac{\psi(-1)p^N}{\tau(\overline{\psi})}\frac{L(E,\overline{\psi},1)}{\Omega_E^{\eta(-1)}},0\right)=\left(\psi(-1)\alpha^NL_p(E,\alpha,\eta,\zeta_{p^n}-1),0\right)\text{ by Theorem \ref{amicevelu}}.\]\end{proof}

\begin{corollary}\label{doubletriangle}$\left(L_p^{\sharp}(E,\eta,\zeta_{p^n}-1),L_p^{\flat}(E,\eta,\zeta_{p^n}-1)\right)\LL(\zeta_{p^n}-1)=\left(\alpha^NL_p(E,\eta,\alpha,\zeta_{p^n}-1),0\right)A^{-N}$
\end{corollary}
\begin{proof}We have $\LL(\zeta_{p^n}-1)=\displaystyle\lim_{m\rightarrow\infty}\LL_n\CC_{n+1}...\CC_{n+m}A^{-m}A^{-N}(\zeta_{p^n}-1).$
But $\CC_{n+m}(\zeta_{p^n}-1)=A$ for ${m>0}$ from Observation \ref{observation}, so $\LL(\zeta_{p^n}-1)=\LL_n(\zeta_{p^n}-1)A^{-N}.$
\end{proof}

\begin{lemma}\label{marumaru} We have the matrix identity $A^{-N}\links 1 & 1 \\ -\alpha^{-1} & -\overline{\alpha}^{-1}\rechts = \links \alpha^{-N} & \overline{\alpha}^{-N}\\ -\alpha^{-N-1}& -\overline{\alpha}^{-N-1}\rechts$.
\end{lemma}
\begin{proof} We obtain this by diagonalization or by induction. Note that $1=\frac{a_p}{p}\alpha-\frac{1}{p}\alpha^2$.
\end{proof}

\begin{definition} Put $\links \log_\alpha^{\sharp}(1+X) & \log_{\overline{\alpha}}^{\sharp}(1+X) \\ \log_\alpha^{\flat}(1+X) & \log_{\overline{\alpha}}^{\flat}(1+X) \rechts:=\LL(X)\links 1 & 1 \\ -\alpha^{-1}& -\overline{\alpha}^{-1}\rechts.$ \end{definition}


\begin{lemma}We have \[\log_\alpha^{\sharp}(1+X)\sim \log_{\overline{\alpha}}^{\flat}(1+X)\sim\log_p(1+X)^{\frac{1}{2}}\text{ or }\log_\alpha^{\flat}(1+X)\sim \log_{\overline{\alpha}}^{\sharp}(1+X)\sim\log_p(1+X)^{\frac{1}{2}}.\]
\end{lemma}

\begin{proof}
From the Growth Lemma \ref{growthlemma}, $\log_\alpha^*=\log_\alpha^*(1+X)\in O(\log_p(1+X)^{\frac{1}{2}})$ for $*\in\{\sharp,\flat\}$. \[\begin{array}{ll}\det \links \log_\alpha^{\sharp} & \log_{\overline{\alpha}}^{\sharp} \\ \log_\alpha^{\flat} & \log_{\overline{\alpha}}^{\flat} \rechts& =\det (\LL) \det \links 1 & 1 \\ -\alpha^{-1} &-\overline{\alpha}^{-1}\rechts \sim \log_p(1+X), \text{ cf. Example \ref{growthexample},}\end{array}\] so $\log_\alpha^{\sharp}(1+X)\log_{\overline{\alpha}}^{\flat}(1+X)-\log_{\overline{\alpha}}^{\sharp}(1+X)\log_\alpha^{\flat}(1+X)\sim\log_p(1+X).$
\end{proof}
\begin{definition}Let $K$ be a finite extension of $\Q_p$. We let \[\mathcal{A}(K):=\left\{f\in K[[X]]\;  \big| \;f \text{ is convergent on the open unit disc of }\C_p\right\}.\]
\end{definition}
\begin{lemma}[Interpolation Lemma]\label{interpolationlemma}Let $0\leq h <1$ be a real number, and let $f(X),g(X)\in\mathcal{A}(K)$. If $f(X)\in O(\log_p(1+X)^h)$ and $g(X) \in O(\log_p(1+X)^h)$ satisfy $f(\zeta_{p^n}-1)=g(\zeta_{p^n}-1)$ for all primitive $p^n$th roots of unity with $n \geq 1$, then $g(X)=f(X)$. 
\end{lemma}
\renewcommand{\qedsymbol}{{$\textit{QEA}$}}
\begin{proof}
Put $h(X):=f(X)-g(X) \in O(\log_p(1+X)^h)$. Then $h(\zeta_{p^n}-1)=0$, so by \cite[Lemma 4.7]{lazard}, $\frac{\log_p(1+X)}{X}|h(X)$ in $\mathcal{A}(K)$. If $h(X)\neq 0$, this would yield $\frac{\log_p(1+X)}{X}\sim \log_p(1+X)\in O(h(X))$ (cf. \cite[discussion before Proposition 2.11]{pollacksthesis}, \cite[Proposition I.4.5]{colmez}), and thus $\log_p(1+X)\in O(\log_p(1+X)^h)$,
\end{proof}

We are now ready to state the main theorem:
\begin{maintheorem}
\[\log^{\sharp}_\alpha(1+X)L_p^{\sharp}(E,\eta,X)+\log^{\flat}_\alpha(1+X)L_p^{\flat}(E,\eta,X)=L_p(E,\alpha,\eta,X)\]
\end{maintheorem}
\renewcommand{\qedsymbol}{{$\textit{QED}$}}
\begin{proof}Put $L_p^*=L_p^*(E,\eta,\zeta_{p^n}-1), \log_\alpha^*=\log_\alpha^*(\zeta_{p^n})$ for $*\in\{\sharp,\flat\}$, and $L_p(\alpha)=L_p(E,\alpha,\eta,\zeta_{p^n}-1)$. From Corollary \ref{doubletriangle} and Lemma \ref{marumaru}, we have
\[(L_p^{\sharp},L_p^{\flat}) \links \log_\alpha^{\sharp} & \log_{\overline{\alpha}}^{\sharp} \\ \log_\alpha^{\flat} & \log_{\overline{\alpha}}^{\flat} \rechts =(\alpha^NL_p(\alpha),0)\links \alpha^{-N} & \overline{\alpha}^{-N}\\ -\alpha^{-N-1} & \overline{\alpha}^{-N-1}\rechts = (L_p(\alpha), L_p(\overline{\alpha})),\]
since by Theorem \ref{amicevelu}, $\alpha^NL_p(\alpha)=\overline{\alpha}^NL_p(\overline{\alpha})$. Because this holds for all primitive $p^n$th roots of unity $\zeta_{p^n}$ and both sides are $O(\log_p(1+X)^{\frac{1}{2}})$, they agree by the Interpolation Lemma \ref{interpolationlemma} above with $h=\frac{1}{2}$.
\end{proof}

\begin{remark} By convention (e.g. \cite{pollacksthesis}), we suppress the character $\eta$ from the notation if it is trivial. Thus for the trivial character, we have
\[\log^{\sharp}_\alpha(1+X)L_p^{\sharp}(E,X)+\log^{\flat}_\alpha(1+X)L_p^{\flat}(E,X)=L_p(E,\alpha,X)\]
as in the introduction.
\end{remark}

Now for general supersingular $p|a_p$, we can evaluate $L_p^{\sharp}(E,\eta,X)$ and $L_p^{\flat}(E,\eta,X)$ at $X=0$:
\[
\begin{tabular}{|c||c|c|}
\hline
&$L_p^{\sharp}(E,\eta,0)$ & $L_p^{\flat}(E,\eta,0)$\\\hline\hline
$p$ odd, $\eta=1$&$(-a_p^2+2a_p+p-1)\frac{L(E,1)}{\Omega_E^+}$&$(2-a_p)\frac{L(E,1)}{\Omega_E^+}$\\\hline
$p$ odd, $\eta\neq 1$&$-pa_p\frac{L(E,\overline{\eta},1)}{\tau(\overline{\eta})\Omega_E^{\eta(-1)}}$&$-p\frac{L(E,\overline{\eta},1)}{\tau(\overline{\eta})\Omega_E^{\eta(-1)}}$\\\hline
$p=2$, $\eta=1$&$(-a_p^3+2a_p^2+2pa_p-a_p-2p)\frac{L(E,1)}{\Omega_E^+}$&$(-a_p^2+2a_p+p-1)\frac{L(E,1)}{\Omega_E^+}$\\\hline
$p=2$, $\eta\neq 1$&$-p^2a_p\frac{L(E,\overline{\eta},1)}{\tau(\overline{\eta})\Omega_E^{\eta(-1)}}$&$-p^2\frac{L(E,\overline{\eta},1)}{\tau(\overline{\eta})\Omega_E^{\eta(-1)}}$\\\hline
\end{tabular}
\]

\begin{propn}\label{atleastoneisnonzero}Let $\eta:\Delta\rightarrow \Z_p^\times$ be any character. Then at least one of $L_p^{\sharp}(E,\eta,X)$ and $L_p^{\flat}(E,\eta,X)$ is a nonzero function. If $L(E,\overline{\eta},1)\neq 0$, then they are both nonzero.
\end{propn}
\begin{proof}
By a theorem of Rohrlich (\cite{rohrlich}), $L_p(E,\alpha,\eta,\zeta_{p^n}-1)\neq0$ for $n \gg 0$. Thus, the vector $(L_p^{\sharp}(E,\eta,X),L_p^{\flat}(E,\eta,X))$ can't be zero. Further, if $L(E,\overline{\eta},1)\neq 0$, then the assertion follows from the table above.
\end{proof}

In view of Mazur's and Swinnerton-Dyer's conjecture in the ordinary case \cite[Conjecture 1]{mazurSD} and the fact that neither $L_p^{\sharp}(E,\eta,X)$ nor $L_p^{\flat}(E,\eta,X)$ vanish when $a_p=0$ (cf. \cite[Corollary 5.11]{pollacksthesis}), it seems reasonable to conjecture the following:
\begin{conjecture}\label{nonzeroness}Let $E$ be an elliptic curve, $p$ be a prime of good supersingular reduction, and $\eta:\Delta\rightarrow \Z_p^\times$ be any character. Then $L_p^{\sharp}(E,\eta,X)$ and $L_p^{\flat}(E,\eta,X)$ are both non-zero functions.
\end{conjecture}

Proposition \ref{atleastoneisnonzero} gives even more evidence for this conjecture via the fact that a positive proportion of elliptic curves has rank zero, cf. \cite{bhargavaarul}.

\begin{remark}\label{comparison}We can recover Kobayashi's and Pollack's results concerning $a_p=0$. We have \[\LL(X)=\links \log_p^+(1+X) & 0 \\ 0 & p\log_p^-(1+X) \rechts\text{for odd $p$,}\]  \[\LL(X)=\links 0 & -\log_p^+(1+X) \\ \log_p^-(1+X) & 0 \rechts\text{for $p=2$}.\]
Thus we find that
\[\hidari\log_\alpha^{\sharp}(1+X)\\ \log_\alpha^{\flat}(1+X)\migi=  \hidari \log_p^+(1+X)\\ \alpha\log_p^-(1+X)\migi \text{ if $p$ is odd,}\]\[
\hidari\log_\alpha^{\sharp}(1+X)\\ \log_\alpha^{\flat}(1+X)\migi=  \hidari -\frac{1}{2}\log_2^+(1+X)\alpha\\ \log_2^-(1+X)\migi \text{ if $p=2$.}\\
\]
This shows that the extra factor of $\frac{1}{2}$ in \cite[Theorem 5.6]{pollacksthesis} comes from an extra factor of $A^{-1}$ in Definition \ref{defofLL} of $\LL$ because the conductor $N$ is one larger than usual if $p=2$.
\[\begin{array}{ll}(L_2^{\sharp},L_2^{\flat})&=(-L_2^-,L_2^+) \text{ in Pollack's notation,}\\
(L_p^{\sharp},L_p^{\flat})&=\begin{cases}(L_p^+,L_p^-) \text{ in Pollack's notation}\\
(L_p^-,L_p^+)\text{ in Kobayashi's notation}\end{cases}\text{ if } p \text{ is odd.}\end{array}\]

The matrix $\tilde{Y}$ was inserted so as to match Kobayashi's construction for $a_p=0$. He constructed his $L_p^\pm$ from elements $(c_n^-,c_n^+)=\left(\delta_n^{-N+1},\delta_n^{-N}\right)$. This is why $c_0^+=-c_0$ rather than $c_0^+=c_0$ in \cite{kobayashisthesis}. Let $\widetilde{\omega_n}^-:=\prod_{m\in\N}\Phi_{2m-1}(1+X),  \widetilde{\omega_n}^+:=\prod_{m\in\N}\Phi_{2m}(1+X)$. Denoting the projection $\Lambda\rightarrow \Lambda_n$ by an overline,
\[\begin{array}{ccc} (P_n^-\varepsilon_\eta,P_n^+\varepsilon_\eta)&=(\overline{L_p^-(E,\eta,X)},\overline{L_p^+(E,\eta,X)}){\links \widetilde{\omega_n}^+ & 0 \\ 0 & \widetilde{\omega_n}^- \rechts}&=(\overline{L_p^-(E,\eta,X)},\overline{L_p^+(E,\eta,X)})\LL_nY_{-N}\\ \vspace{2mm}\parallel
\\
(P_n^{-N+1}\varepsilon_\eta,P_n^{-N}\varepsilon_\eta)&=\varepsilon_{\eta}\col_n(\mathbf{z}^{\eta(-1)})\LL_nY_{-N}&=(\overline{L_p^{\sharp}(E,\eta,X)},\overline{L_p^{\flat}(E,\eta,X)})\LL_nY_{-N}.
\end{array}\]
\end{remark}
\end{section}

\begin{section}{The Two Selmer Groups}\label{mainconjecture}
From now on, assume $p$ is odd.
\subsection{The Image of the Coleman Map}

Recall that the map $\col_n$ from Proposition \ref{col} was defined by

\[\xymatrix {H^1(k_n,T)\ar[r]^{\hspace{2.5mm}\col_n}\ar@/_8mm/[rd]^{P_n^{i+1}, P_n^i}& \frac{\Lambda_n \oplus \Lambda_n}{\ker \llll_n}\ar@{^(->}[d]^{\llll_n^i}\\
&\Lambda_{n}\oplus\Lambda_{n}.}
\]
We can split the limit of these maps $\col=\displaystyle\varprojlim_n \col_n$ and define $\sharp/\flat$-Coleman maps: \begin{definition}
$\col =: (\col^{\sharp}, \col^{\flat})$.
\end{definition}
However, we cannot split $\col_n$ in general, since $\ker \llll_n$ lives in $\Lambda_n\oplus \Lambda_n$. If $n=0$, we have $\ker \llll_0=0$ and thus we can put:
\begin{definition}$\col_0=:(\col_0^{\sharp},\col_0^{\flat})=(-a_pP_0^1+P_0^0,-P_0^1)$.
\end{definition}
Thanks\footnote{It would have been more natural to choose $-\tilde{Y}=(AB_{-})^{-1}$, but then we would have been off by a factor of $-1$ from Kobayashi's sign convention (cf. Remark \ref{comparison}) in the case $a_p=0$.} to $\tilde{Y}$, $\ker \llll_1=0\oplus X\Lambda_1$, so we can write $\col_1=(\col_1^{\sharp}, \col_1^{\flat})$ and $\col_1^{\sharp}=-P_1^1$.
\begin{propn}\label{colupsilon} $\col^{\flat}$ is surjective.
\end{propn}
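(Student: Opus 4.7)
The plan is to apply Nakayama's lemma to the cokernel of $Col^\upsilon$. The image of $Col^\upsilon$ is a closed ideal $I \subset \Lambda$, and $\Lambda/I$ is a cyclic (hence finitely generated) $\Lambda$-module. Since $|\Delta|$ is coprime to $p$, $\Lambda$ decomposes as a product $\prod_\eta \Z_p[[X]]$ over characters $\eta$ of $\Delta$, and Nakayama reduces the claim $I = \Lambda$ to showing that each $\eta$-component of $Col^\upsilon$ surjects modulo the maximal ideal $(p, X)$ of the corresponding factor $\Z_p[[X]]$.

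Next I would identify $Col^\upsilon$ modulo $X$. Combining the level-wise compatibility proved just before the definition of $Col$ with the vanishing $\ker \phi_0 = 0$, reduction of $Col$ modulo $\omega_0 = X$ agrees with $Col_0 = (P_0^1, P_0^0)$; so $Col^\upsilon \bmod X$ equals the composition of the level-$0$ projection $\pi_0 : \mathbf{H}^1_{\Iw}(T) \to H^1(k_0, T)$ with $P_0^0 : H^1(k_0, T) \to \Z_p[\Delta]$. The required surjectivity modulo $(p, X)$ therefore reduces to showing $P_0^0 \circ \pi_0$ is surjective modulo $p$ on each $\eta$-isotypic component. Surjectivity of $\pi_0$ follows from Iwasawa descent together with $T^{G_{k_n}} = 0$ for all $n$, which gives the vanishing $H^2(\Gal(k_{n+1}/k_n), T^{G_{k_{n+1}}}) = 0$ and hence surjective corestriction at each level.

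It thus remains to analyze $P_0^0 \pmod{p}$. By $\Delta$-equivariant local Tate duality, the cup-product pairing $(\cdot,\cdot)_0$ identifies $P_0^0$ with the $\Z_p[\Delta]$-linear dual of the map $\Z_p[\Delta] \to \F(\gm_0)$ sending $1 \mapsto c_0$, so the $\eta$-component of $P_0^0$ is surjective modulo $p$ precisely when $c_0^\eta$ is primitive in $\F(\gm_0)^\eta/p$. This character-by-character primitivity check is the main obstacle. I would attack it by computing $e_\eta \log_\F(c_0)$ via
\[\log_\F(c_0) = \log_\F(\e) + \log_\F(\zeta_p - 1) = \frac{p}{p + 1 - a_p} + \log_\F(\zeta_p - 1),\]
using that $p \mid a_p$ for supersingular $p$ (so $v_p(p/(p + 1 - a_p)) = 1$) together with the Gauss-sum identity $e_\eta(\zeta_p - 1) = \tau(\eta^{-1})/(p - 1)$ for $\eta \neq 1$ and the elementary computation $e_1(\zeta_p - 1) = -p/(p - 1)$. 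In every case one finds $v_p(e_\eta \log_\F(c_0)) < 1 + 1/(p-1)$, the infimum of $v_p$ on $\log_\F(p\F(\gm_0))$; hence $c_0^\eta \notin p\F(\gm_0)^\eta$ for every $\eta$, which completes the proof.
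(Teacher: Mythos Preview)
Your argument and the paper's share the same skeleton: reduce via Nakayama to the bottom of the tower, identify $Col^\upsilon\bmod X$ with $P_0^0$, and deduce surjectivity of $P_0^0$ from local Tate duality. The genuine divergence is in how you establish that $P_0^0$ is surjective. The paper does this structurally: it proves $\mathrm{Tr}_{k_0/\Q_p}(c_0)=(\text{unit})\cdot c_{-1}$, so $\{c_0^\sigma\}_{\sigma\in\Delta}$ already generate $\F(\gm_0)$; a rank count upgrades this to a $\Z_p$-basis, whence $P_0^0$ is literally the composite $H^1(k_0,T)\to\Hom(\F(\gm_0),\Z_p)\xrightarrow{\sim}\Lambda_0$, and the first arrow is surjective because its Pontryagin dual is the Kummer injection $\F(\gm_0)\otimes\Q_p/\Z_p\hookrightarrow H^1(k_0,V/T)$. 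No character-by-character analysis, no logarithm valuations.

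Your route---primitivity of $c_0^\eta$ via $v_p(e_\eta\log_\F(c_0))<1+\tfrac{1}{p-1}$---is also correct, but two points in the write-up deserve care. First, you quote $e_\eta(\zeta_p-1)=\tau(\eta^{-1})/(p-1)$, yet what you need is $e_\eta\log_\F(\zeta_p-1)$; these differ by the higher terms of Kobayashi's logarithm $\log_\F(X)=X+\tfrac{a_p}{p}X^p+\cdots$, and you must check those terms contribute valuation $\ge p/(p-1)$ at $X=\zeta_p-1$ (they do, since $v_p(a_p)\ge1$). Second, for $\eta=1$ the two leading contributions $\tfrac{p}{p+1-a_p}$ and $e_1(\zeta_p-1)=-\tfrac{p}{p-1}$ both have valuation exactly $1$, so you need the non-cancellation $v_p\!\big(\tfrac{1}{p+1-a_p}-\tfrac{1}{p-1}\big)=0$, i.e.\ $v_p(a_p-2)=0$; this holds for all odd supersingular $p$ but should be said. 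The paper's trace-relation argument sidesteps both issues and avoids invoking the explicit shape of $\log_\F$ or Gauss-sum valuations.
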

\begin{lemma}$\F(\gm_0)$ is generated by $(c_0^\sigma)_{\sigma \in \G_0}$ as a $\Z_p$-module.
\end{lemma}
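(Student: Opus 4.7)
The strategy is to show the stronger claim that $c_0$ generates $\F(\gm_0)$ as a $\Z_p[\G_0]$-module. Since $\F(\gm_0)$ is $\Z_p$-free of rank $[k_0:\Q_p] = p-1 = |\G_0|$ via the formal logarithm, this $\Z_p[\G_0]$-cyclicity is equivalent to the $p-1$ Galois conjugates $\{c_0^\sigma\}_{\sigma\in\G_0}$ forming a $\Z_p$-basis, which is precisely the statement of the lemma.

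From the remark at the end of Section 2, $c_0$ and $c_{-1} = \e$ together generate $\F(\gm_0)$ as a $\Z_p[\G_0]$-module, so it suffices to show that $c_{-1}$ lies in the $\Z_p[\G_0]$-orbit of $c_0$. Because $c_{-1}$ is $\G_0$-invariant, any element of $\Z_p[\G_0]\cdot c_0$ that equals $c_{-1}$ must be a $\Z_p$-multiple of the formal-group norm $N_{\F}(c_0) := [+]_{\sigma\in\G_0} c_0^\sigma$, which lies in $\F(\gm_0)^{\G_0} = \F(p\Z_p)$. Thus the task reduces to showing $c_{-1} \in \Z_p \cdot N_{\F}(c_0)$ inside $\F(p\Z_p)$.

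I would then apply the formal logarithm, which realizes $\F(p\Z_p) \cong p\Z_p$ as $\Z_p$-modules, so the desired containment becomes a $p$-adic valuation comparison. One has $\log_{\F}(c_{-1}) = \log_{\F}(\e) = \tfrac{p}{p+1-a_p}$, of $p$-adic valuation exactly $1$ since supersingularity forces $p \mid a_p$, making $p+1-a_p$ a $p$-adic unit. On the other hand, $\log_{\F} N_{\F}(c_0) = (p-1)\log_{\F}(\e) + \Tr_{k_0/\Q_p}(\log_{\F}(\zeta_p - 1))$; the trace term lies in $p\Z_p$, since $\log_{\F} \in X + X^2\Z_p[[X]]$ (Honda) and $\Tr_{k_0/\Q_p}(\gm_0) \subset p\Z_p$ by tame ramification.

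The key step, and the main obstacle, is the non-cancellation check: that $(p-1)\log_{\F}(\e) + \Tr_{k_0/\Q_p}(\log_{\F}(\zeta_p - 1)) \not\equiv 0 \pmod{p^2}$. This follows from a leading-term calculation using $\Tr_{k_0/\Q_p}(\zeta_p - 1) = -p$ and bounding the higher corrections coming from $c_n \Tr_{k_0/\Q_p}((\zeta_p - 1)^n)$ for $n \geq 2$ (these lie in $p^2\Z_p$ by explicit valuation counts on the tame trace, combined with the $p$-integrality of the coefficients $c_n$ of $\log_\F$). Once verified, $c_{-1}$ is a unit $\Z_p$-multiple of $N_{\F}(c_0)$, hence lies in $\Z_p\{c_0^\sigma\}_{\sigma}$, and combined with $c_0$ itself being in this set we conclude $\F(\gm_0) = \Z_p\{c_0^\sigma\}_{\sigma \in \G_0}$ as claimed.
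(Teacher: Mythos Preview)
Your approach is essentially the same as the paper's: the paper simply records that $\Tr_{k_0/\Q_p}(c_0) = (\text{unit})\cdot c_{-1}$ (citing Pollack's \emph{An algebraic version of a theorem of Kurihara}) and combines this with the fact that $c_0,c_{-1}$ generate $\F(\gm_0)$ over $\Z_p[\G_0]$. You are trying to prove this unit relation directly via the formal logarithm, which is exactly what the cited reference does.

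However, your direct computation has a genuine gap. The assertion ``$\log_{\F} \in X + X^2\Z_p[[X]]$'' is false: no formal group of finite height over $\Z_p$ has an integral logarithm. In the Honda construction used here, $\log_\F(X)=\sum_{k\ge 0} b_k X^{p^k}$ is lacunary, with $b_0=1$, $b_1=a_p/p$, and $b_2=(a_p^2-p)/p^2$; already $b_2\notin\Z_p$. Consequently your justification that the corrections $c_n\Tr((\zeta_p-1)^n)$ lie in $p^2\Z_p$ ``by $p$-integrality of the $c_n$'' does not stand as written. The argument \emph{is} salvageable, but for a different reason: the only nonzero terms are at $n=p^k$, one has $b_1=a_p/p\in\Z_p$ by supersingularity, and for $k\ge 2$ the denominators in $b_k$ are dominated by the large valuation of $\Tr((\zeta_p-1)^{p^k})$. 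With that correction your leading-term computation gives $(p-1)\log_\F(\e)+\Tr(\log_\F(\zeta_p-1))\equiv -2p\pmod{p^2}$, hence a unit multiple of $\log_\F(c_{-1})$ as needed. (A minor aside: your claim that any invariant element of $\Z_p[\G_0]\cdot c_0$ \emph{must} be a $\Z_p$-multiple of the norm is unnecessary and not obviously true; you only use the sufficient direction.)
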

\begin{proof} We have $\Tr_{k_0/\Q_p}(c_0)=(a_p-2) c_{-1}$ by Theorem \ref{tracerelations} (2), and we know that $\F(\gm_0)$ is generated by $(c_0^\sigma)$ and $(c_{-1}^\sigma)$.
\end{proof}
\begin{lemma}\label{generators}$(c_0^\sigma)_{\sigma \in \G_0}$ are a basis for $\F(\gm_0)$ as a $\Z_p$-module.
\end{lemma}

\begin{proof}$\rank_{\Z_p}(\F(\gm_0)^\eta)=1$ for any character $\eta: \Delta \rightarrow \Z_p^\times$.
\end{proof}
\renewcommand{\qedsymbol}{{$\mathit{QED}$}}
\begin{proof} [Proof of Proposition \ref{colupsilon}] We can prove the surjectivity of $\col_0^{\flat}=-P_0^1$ as in \cite[Proposition 8.23]{kobayashisthesis}: $P_0^1(z)=\displaystyle \sum_{\sigma\in\Delta}(c_0^\sigma,z)_0\sigma$, so from Lemma \ref{generators}, $P_0^1$ is given by

\[H^1(k_0,T)\rightarrow \Hom(\F(\gm_0),\Z_p)\cong \Lambda_0,\]
where the first map comes from the pairing given by the cup product $(\:,\:)_0:\F(\gm_0)\times H^1(k_0,T)\rightarrow \Z_p$ (see \cite[(8.23) on page 18]{kobayashisthesis}), and the last identification is $f\mapsto \displaystyle \sum_{\sigma \in \Delta} f(c_0^\sigma)\sigma$.

Now $H^1(k_0,T)\rightarrow \Lambda_0$ is surjective since its Pontryagin dual is the injection \[{\F(\gm_0)\otimes \Q_p/\Z_p}={\hat{E}(k_0)\otimes\Q_p/\Z_p}\hookrightarrow H^1(k_0,V/T),\] where $V=T\otimes\Q_p$. In fact, $E(k_0)\otimes\Q_p/\Z_p\hookrightarrow H^1(k_0,V/T)$ is always an injection, so we want $\hat{E}(k_0)\otimes\Q_p/\Z_p\rightarrow E(k_0)\otimes\Q_p/\Z_p$ to be one as well. Now this follows from the cokernel $\tilde{E}(\mathbb{F}_p)$ of $\hat{E}(k_0)\hookrightarrow E(k_0)$ having no $p$-torsion, since $p$ is supersingular.

Consider the following commutative diagram with exact rows:

\[\xymatrix {\displaystyle\varprojlim_n H^1(k_{n},T)\ar@{->}[r]^{\hspace{10mm}\col^{\flat}}\ar[d]^{\cor}& \Lambda\ar[d]\ar[r] &\coker(\col^{\flat})\ar[r]\ar[d]&0\\
H^1(k_0,T)\ar[r]^{\hspace{0mm}\col_0^{\flat}}& \Lambda_0=\Z_p[\Delta]\ar[r]&\coker(\col_0^{\flat})\ar[r]&0}
\]
 The surjectivity of $\col^{\flat}$ follows from Nakayama's Lemma if we know that the corestriction $\cor$ is surjective, which is indeed the case since this is true at finite levels: $H^1(k_n,T)\rightarrow H^1(k_m,T)$ is surjective if $n\geq m$. This can be seen as follows: The kernel of the restriction map $H^1(k_m,V/T)\rightarrow H^1(k_n,V/T)$ is $H^1(\Gal(k_n/k_m), H^0(k_n,V/T))$ by the inflation-restriction sequence. But the formal group $\F$ has no $p$-power torsion in $k_n$ by Lemma \ref{noptorsion}, so $H^0(k_n,V/T)=0$. Thus the kernel is zero, so we are done after using Tate duality.
\end{proof}

\begin{propn}\label{proptheta} If $\eta$ is trivial, then $\varepsilon_\eta \col^{\sharp}$ is surjective. If $\eta$ is nontrivial, then $\im(\varepsilon_\eta \col^{\sharp})=J^\eta$. Here, ${J= (a_p+a_pX+\frac{a_p}{p}X^2,X)\subset \Lambda}$.
\end{propn}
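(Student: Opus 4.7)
Following the strategy of Proposition~\ref{colupsilon}, the plan is to read $Col^\vartheta$ at the lowest two levels where the $\vartheta$-component is canonically split off and then lift to the limit by Nakayama. These are $n=0$, where $\ker\phi_0=0$ forces $Col_0^\vartheta=P_0^1$, and $n=1$, where $\ker\phi_1=0\oplus X\Lambda_1$ forces $Col_1^\vartheta=P_1^0$.

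For $\eta$ trivial I would reduce $Col^\vartheta$ modulo $X$. Using $\delta_0^1=a_pc_0-c_{-1}$ from the table, together with $c_{-1}=u^{-1}N_\Delta c_0$ (where $u\in\Z_p^\times$ comes from the lemma $Tr_{k_0/\Q_p}c_0=u\,c_{-1}$ and $N_\Delta:=\sum_{\sigma\in\Delta}\sigma$), $\Z_p$-linearity of $P_{0,\bullet}$ in its subscript gives
\[\varepsilon_\eta P_0^1 \;=\; \bigl(a_p-u^{-1}(p-1)\bigr)\,\varepsilon_\eta P_0^0.\]
Since $p\mid a_p$, the scalar reduces to the unit $u^{-1}\pmod p$; combined with the surjectivity of $\varepsilon_\eta P_0^0$ established in the proof of Proposition~\ref{colupsilon}, this makes $\varepsilon_\eta P_0^1$ surjective, and Nakayama over $\Z_p[[X]]$ lifts this to surjectivity of $\varepsilon_\eta Col^\vartheta$.

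For $\eta$ nontrivial the identity now reads $\varepsilon_\eta P_0^1=a_p\,\varepsilon_\eta P_0^0$ (since $\varepsilon_\eta N_\Delta=0$), whose image is $a_p\Z_p$. This yields $\mathrm{im}(\varepsilon_\eta Col^\vartheta)\bmod X=a_p\Z_p$ and hence the containment $\mathrm{im}(\varepsilon_\eta Col^\vartheta)\subseteq a_p\Z_p+X\Z_p[[X]]$; noting $(a_p+a_pX+\tfrac{a_p}{p}X^2)-a_p\in X\Z_p[[X]]$, one checks this ideal coincides with $J^\eta$.

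The reverse inclusion is the main obstacle and is handled at $n=1$. Writing $\sigma\in\G_1$ uniquely as $\delta\tau^j$ with $\delta\in\Delta$ and $\tau$ a generator of $\Gal(k_1/k_0)$, the analogue of \cite[Prop.\ 8.23]{kobayashisthesis} gives a surjection $H^1(k_1,T)\twoheadrightarrow\Hom_{\Z_p}(\F(\gm_1),\Z_p)$, so the functional $f=(-,z)_1$ ranges freely over $\Z_p$-linear maps subject only to the relation $N_\tau c_1=a_pc_0-c_{-1}$ in $\F(\gm_1)$ (which is just $Tr_{k_1/k_0}c_1=\delta_0^1$). Writing $\varepsilon_\eta P_1^0(z)=\sum_{j=0}^{p-1}g_j(1+X)^j$ with $g_j=\sum_\delta\eta(\delta)f(c_1^{\delta\tau^j})$, the $\eta$-projection of this single relation collapses to $\sum_jg_j\in a_p\Z_p$ (the $u^{-1}N_\Delta c_0$ contribution dies under $\varepsilon_\eta$); conversely, explicit choices of $f$ put both $a_p\cdot 1$ and $X=(1+X)-1$ inside the image. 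Hence $\mathrm{im}(\varepsilon_\eta P_1^0)=J^\eta\bmod\omega_1$. Since $\omega_1=pX+\binom{p}{2}X^2+\cdots+X^p$ lies in $(pX,X^2)\subseteq\mathfrak{m}J^\eta$ with $\mathfrak{m}=(p,X)$, we conclude $\mathrm{im}(\varepsilon_\eta Col^\vartheta)+\mathfrak{m}J^\eta=J^\eta$, and Nakayama for the finitely generated $\Z_p[[X]]$-module $J^\eta$ yields the desired equality.
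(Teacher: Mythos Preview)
Your approach is correct and in places more economical than the paper's, though one step in the nontrivial case is asserted rather than proved.

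\textbf{Trivial $\eta$.} Your level-$0$ reduction is a genuine simplification. The paper works at level $1$ for both characters, invoking Lemma~\ref{decomposition} (the exact sequence $0\to\F(\gm_{-1})\to\F(\gm_0)\oplus\mathcal{C}_{ss}^{\delta_1^{-2}}(\gm_1)\to\F(\gm_1)\to 0$) and its rank corollary to produce a $\Z_p$-basis $(\varepsilon_\eta(\delta_1^{-2})^\sigma)_{\sigma\in\Gamma_1}$ of $\mathcal{C}_{ss}^{\delta_1^{-2}}(\gm_1)^\eta$, then mimics the $Col^\upsilon$ argument. Your observation that $\varepsilon_\eta P_0^1=(a_p-u^{-1}(p-1))\,\varepsilon_\eta P_0^0$ with the scalar a $p$-adic unit bypasses all of this and feeds directly into Nakayama modulo $\omega_0=X$.

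\textbf{Nontrivial $\eta$.} Your containment $\mathrm{im}\subseteq J^\eta$ from the level-$0$ identity $\varepsilon_\eta P_0^1=a_p\,\varepsilon_\eta P_0^0$ is clean, as is the final Nakayama step (indeed $\omega_1\in(pX,X^2)\subseteq\mathfrak m J^\eta$). The soft spot is the sentence ``explicit choices of $f$ put both $a_p\cdot 1$ and $X$ inside the image.'' You have not exhibited those $f$, and their existence is exactly the content of the paper's Lemma~\ref{decomposition}: it is what guarantees that no \emph{further} relations constrain the $g_j$ beyond $\sum_j g_j\in a_p\Z_p$. Your route can be completed more cheaply than the paper's, however: the $p{+}1$ elements $\varepsilon_\eta c_1^{\tau^j}$ ($0\le j\le p{-}1$) and $\varepsilon_\eta c_0$ generate the rank-$p$ module $\F(\gm_1)^\eta$, so the kernel of the presentation $\Z_p^{p+1}\twoheadrightarrow\F(\gm_1)^\eta$ is free of rank $1$; since the known relation vector $(1,\ldots,1,-a_p)$ has a unit entry it is primitive, hence generates that kernel. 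Dualizing then gives $\mathrm{im}(\varepsilon_\eta P_1^0)=\{\sum_j g_j(1+X)^j:\sum_j g_j\in a_p\Z_p\}$ on the nose, which is precisely $J^\eta\bmod\omega_1$. You should insert this sentence; without it the reverse inclusion is not actually established.

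In short: same destination, different vehicle. The paper trades your primitivity check for a structural exact sequence that also yields the rank of $\mathcal{C}_{ss}^{\delta_1^{-2}}(\gm_1)^\eta$; your argument is shorter but needs the missing line above to close.
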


\begin{lemma}\label{decomposition} We have an exact sequence
\[0 \longrightarrow \F(\gm_{-1})
\stackrel{\Delta}\longrightarrow \F(\gm_0) \oplus \mathcal{C}_{ss}^{\delta_1^{-1}}(\gm_1)
\stackrel{[-]}{\longrightarrow} \F(\gm_1)
\longrightarrow 0,\]
where  $\mathcal{C}_{ss}^{\delta_1^{-1}}(\gm_1)$ is the $\Z_p$-submodule of $\F(\gm_1)$ generated by $\{(\delta_1^{-1})^{\sigma}\}_{\sigma\in\G_1}$, $\Delta$ the diagonal map, and $[-]: (a,b)\mapsto (a-b).$
\end{lemma}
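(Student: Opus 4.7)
The plan is to check exactness at each of the three positions. Injectivity of $\Delta$ is immediate. For $\im(\Delta) \subseteq \ker([-])$, the trace relation $\delta_0^{-1} = \Tr_{1/0}(\delta_1^{-2})$ coming from the definition of the $\delta_n^i$ reads, in our notation, as $c_{-1} = N_H \delta_1^{-2}$, where $H = \Gal(k_1/k_0)$ and $N_H := \sum_{h \in H} h$. This shows that $\F(\gm_{-1}) = \Z_p c_{-1}$ is contained in $\mathcal{C}_{ss}^{\delta_1^{-2}}(\gm_1)$, so indeed $\Delta$ lands in $\ker([-])$.

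For surjectivity of $[-]$, I would use that $c_1, c_0$ generate $\F(\gm_1)$ as a $\Z_p[\G_1]$-module. Any $y_0 c_0 \in \Z_p[\G_1]\cdot c_0$ already lies in $\F(\gm_0)$. The table entry $\delta_1^{-2} = -c_1 + \varepsilon c_0$ (with $\varepsilon \in \{0,\pm 1\}$ depending on $a_p$) lets me rewrite $y_1 c_1 = \varepsilon y_1 c_0 - y_1 \delta_1^{-2}$, which splits as a piece in $\F(\gm_0)$ and a piece in $\mathcal{C}_{ss}^{\delta_1^{-2}}(\gm_1)$. Thus $\F(\gm_0) + \mathcal{C}_{ss}^{\delta_1^{-2}}(\gm_1) = \F(\gm_1)$, giving surjectivity of the difference map.

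The crux is the reverse inclusion $\F(\gm_0) \cap \mathcal{C}_{ss}^{\delta_1^{-2}}(\gm_1) \subseteq \F(\gm_{-1})$. Given $x = y\cdot \delta_1^{-2}$ with $y \in \Z_p[\G_1]$ and $x \in \F(\gm_0)$, I will apply $N_H$ in two ways. Since $x$ is $H$-fixed, $N_H x = p x$; using commutativity of $\Z_p[\G_1]$ and that $c_{-1}$ is $\G_\infty$-fixed, $N_H x = y\,(N_H \delta_1^{-2}) = y\, c_{-1} = \epsilon(y)\, c_{-1}$, where $\epsilon\colon \Z_p[\G_1] \to \Z_p$ is the augmentation. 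Hence $p x = \epsilon(y) c_{-1}$, which in $\F(\gm_0)\otimes \Q_p$ reads $x = (\epsilon(y)/p)\, c_{-1}$.

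The hard part is then forcing $\epsilon(y)/p \in \Z_p$, and this is where Corollary \ref{generators} plays the decisive role. The normal basis $\{c_0^\sigma\}_{\sigma \in \G_0}$ gives an isomorphism $\F(\gm_0) \cong \Z_p[\G_0]$ sending $c_0$ to $1$, under which $c_{-1} = u^{-1}\,\Tr_{k_0/\Q_p}(c_0)$ corresponds to $u^{-1} N_{\G_0}$ for some unit $u \in \Z_p^\times$. Since $N_{\G_0}$ has uniform coefficients, $\lambda c_{-1} \in \F(\gm_0)$ if and only if $\lambda \in \Z_p$, so the ambient condition $x \in \F(\gm_0)$ forces $\epsilon(y)/p \in \Z_p$. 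Therefore $x \in \Z_p c_{-1} = \F(\gm_{-1})$, completing the proof.
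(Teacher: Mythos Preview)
Your proof is correct and follows the same overall shape as the paper's: both establish $\F(\gm_{-1})\subset\mathcal{C}_{ss}^{\delta_1^{-2}}(\gm_1)$ from $\Tr_{1/0}\delta_1^{-2}=c_{-1}$, both get surjectivity from the fact that $\delta_1^{-1}=c_0$ and $\delta_1^{-2}$ generate $\F(\gm_1)$, and both reduce exactness in the middle to the inclusion $\F(\gm_0)\cap\mathcal{C}_{ss}^{\delta_1^{-2}}(\gm_1)\subseteq\F(\gm_{-1})$ by computing $\Tr_{1/0}$ on such an element in two ways to obtain $pP\in\F(\gm_{-1})$.

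The only genuine difference is in how this last step is finished. The paper argues that $pP\in\F(\gm_{-1})$ forces $p(P^\sigma-P)=0$ for all $\sigma\in\Gal(k_0/\Q_p)$, and then invokes the absence of $p$-torsion in $\F(\gm_n)$ (\cite[prop.~8.12]{kobayashisthesis}) to conclude $P$ is Galois-fixed, hence in $\F(\gm_{-1})$. You instead feed the equation $px=\epsilon(y)c_{-1}$ into the normal basis $\{c_0^\sigma\}$ from Corollary~\ref{generators}, reading off $\epsilon(y)/p\in\Z_p$ from the integrality of the coefficients. The paper's route is slightly more economical in that it does not need Corollary~\ref{generators} (only torsion-freeness), while yours makes the integrality obstruction completely explicit; both are short and valid, and since Corollary~\ref{generators} is already established before this lemma there is no circularity.
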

\begin{proof}
$\{(\delta_1^{-1})^{\sigma}\}_{\sigma\in\G_1}$ and $\{(\delta_1^0)^\sigma\}_{\sigma\in\G_1}=\{c_0^\sigma\}_{\sigma\in\G_1}$ generate $\F(\gm_1)$ as a $\Z_p$-module, so we have to prove that $\mathcal{C}_{ss}^{\delta_1^{-1}}(\gm_1) \cap \F(\gm_0)= \F(\gm_{-1})$. Let $P \in \mathcal{C}_{ss}^{\delta_1^{-1}}(\gm_1) \cap \F(\gm_0)$. Then $\Tr_{1/0}P \in \F(\gm_{-1})$, since this holds for all elements of $\mathcal{C}_{ss}^{\delta_1^{-1}}$. But since we also have $P \in \F(\gm_0)$, $\Tr_{1/0}P=pP$. Thus, $p(P^\sigma-P)=0$ for all $\sigma \in \Gal(k_0/\Q_p)$. But $\F(\gm_0)$ has no $p$-torsion, so $P$ is invariant under $\Gal(k_0/\Q_p)$, whence $P \in \F(\gm_{-1})$. Also, $\F(\gm_{-1})\subset \mathcal{C}_{ss}^{\delta_1^{-1}}(\gm_1)$ since $\Tr_{1/0}\delta_1^{-1}=\delta_0^0=c_{-1}$ and $\{c_{-1}^\sigma\}_{\sigma \in \Delta}$ generate $\F(\gm_{-1})$ as a $\Z_p$-module (see e.g. \cite{pollack}).
\end{proof}

\begin{corollary}\label{rank} If $\eta:\Delta\rightarrow \Z_p^{\times}$ is the trivial character, then $\rank_{\Z_p} (\mathcal{C}_{ss}^{\delta_1^{-1}}(\gm_1))^\eta=p$.
\end{corollary}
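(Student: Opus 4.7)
The plan is to read off the $\Z_p$-rank of $(\mathcal{C}_{ss}^{\delta_1^{-2}}(\gm_1))^\eta$ by taking $\eta$-components of the short exact sequence in Lemma \ref{decomposition}. Since $p$ is odd, $|\Delta|=p-1$ is a unit in $\Z_p$, so the idempotent $\varepsilon_\eta = \frac{1}{|\Delta|}\sum_{\sigma\in\Delta}\eta(\sigma^{-1})\sigma$ lives in $\Z_p[\Delta]$, and the functor $M\mapsto M^\eta$ is exact on $\Z_p[\Delta]$-modules. The maps in the lemma are $\Z_p[\Delta]$-equivariant ($\Delta$ acts trivially on $\F(\gm_{-1})$, and the diagonal and difference maps are equivariant tautologically), so we get a short exact sequence
\[0 \longrightarrow \F(\gm_{-1})^\eta \longrightarrow \F(\gm_0)^\eta \oplus \mathcal{C}_{ss}^{\delta_1^{-2}}(\gm_1)^\eta \longrightarrow \F(\gm_1)^\eta \longrightarrow 0.\]
Additivity of $\Z_p$-ranks in short exact sequences gives
\[\mathrm{rk}_{\Z_p}(\mathcal{C}_{ss}^{\delta_1^{-2}}(\gm_1))^\eta = \mathrm{rk}_{\Z_p}\F(\gm_1)^\eta - \mathrm{rk}_{\Z_p}\F(\gm_0)^\eta + \mathrm{rk}_{\Z_p}\F(\gm_{-1})^\eta.\]

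Next I would compute each of the three ranks on the right for trivial $\eta$, i.e.\ for the $\Delta$-invariants. First, $\Delta$ acts trivially on $\F(\gm_{-1})=\F(p\Z_p)$, so $\F(\gm_{-1})^\eta=\F(\gm_{-1})$ has $\Z_p$-rank $1$. Second, by Corollary \ref{generators}, $\F(\gm_0)$ is a free $\Z_p[\Delta]$-module of rank one (with basis the $\G_0$-orbit of $c_0$), so each of its $\eta$-components has $\Z_p$-rank $1$. Third, by Kobayashi's no-$p$-power-torsion result (\cite[Prop.\ 8.7]{kobayashisthesis}), $\F(\gm_n)$ is $\Z_p$-free of rank $[k_n:\Q_p]$; taking $\Delta$-invariants gives $\F(\gm_1)^\Delta = \F(\gm_{K_1})$, where $K_1$ is the unique degree-$p$ subfield of $k_1$ (the first layer of the cyclotomic $\Z_p$-extension of $\Q_p$), and so this has $\Z_p$-rank $[K_1:\Q_p]=p$.

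Substituting yields $\mathrm{rk}_{\Z_p}(\mathcal{C}_{ss}^{\delta_1^{-2}}(\gm_1))^\eta = p - 1 + 1 = p$, as claimed. The only step that is not completely formal is the identification of $\mathrm{rk}_{\Z_p}\F(\gm_1)^\eta$ with $p$; this is the main obstacle, but it reduces immediately to the identification $\F(\gm_1)^\Delta=\F(\gm_{K_1})$ and the general fact that the formal-group logarithm trivializes $\F(\gm_L)\otimes\Q_p$ for any finite extension $L/\Q_p$, so that the $\Z_p$-rank equals the degree.
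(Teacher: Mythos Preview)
Your proof is correct and follows exactly the same approach as the paper: the paper's proof is the one-liner ``$\mathrm{rk}_{\Z_p}\F(\gm_{-1})^\eta=1$, $\mathrm{rk}_{\Z_p}\F(\gm_0)^\eta=1$, and $\mathrm{rk}_{\Z_p}\F(\gm_{1})^\eta=p$,'' which implicitly invokes the exact sequence of Lemma~\ref{decomposition} and reads off the answer by additivity of ranks. Your write-up simply supplies the justifications the paper leaves to the reader (exactness of $(-)^\eta$, the identification $\F(\gm_1)^\Delta=\F(\gm_{K_1})$, and the rank-via-logarithm argument), so there is nothing to correct.
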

\begin{proof} $\rank_{\Z_p}\F(\gm_{-1})^\eta=1$, $\rank_{\Z_p}\F(\gm_0)^\eta=1$, and $\rank_{\Z_p}\F(\gm_{1})^\eta=p$.
\end{proof}

\renewcommand{\qedsymbol}{{$\textit{QED}$}}
\begin{proof}[Proof of Proposition \ref{proptheta}] The idea is to use the map $\col_1^{\sharp}$ in the way we used $\col_0^{\flat}$ in the proof of Proposition \ref{colupsilon}. For $\eta$ trivial, $(\varepsilon_\eta {\delta_1^{-1}}^\sigma)_{\sigma\in \Gamma_1}$ is a basis for $\mathcal{C}_{ss}^{\delta_1^{-1}}(\gm_1)^\eta$ by the above Corollary \ref{rank}. Here, we have written $\G_1=\Delta \times \Gamma_1$, and the proof can now proceed as in the $\col^{\flat}$-case, since from Lemma \ref{decomposition}, $\mathcal{C}_{ss}^{\delta_1^{-1}}(\gm_1)^\eta\cong \F(\gm_1)^\eta$, so that $\mathcal{C}_{ss}^{\delta_1^{-1}}(\gm_1)^\eta\otimes\Q_p/\Z_p\hookrightarrow H^1(k_1,V/T)$ is an injection.

For $\eta$ non-trivial, we prove that $\im(\varepsilon_\eta \col_1^{\sharp})=\varepsilon_\eta(a_p+a_pX+\frac{a_p}{p}X^2,X)\Lambda_1$. Then we use Nakayama's Lemma as above.

We have that $\col_1^{\sharp}=-P_1^1=-\frac{a_p}{p}P_1^0+P_1^{-1}$, and from the previous Lemma \ref{decomposition} that $\F(\gm_1)^{\eta^{-1}}\cong \F(\gm_0)^{\eta^{-1}}\oplus \mathcal{C}_{ss}^{\delta_1^{-1}}(\gm_1)^{\eta^{-1}}$. Thus we can describe $-\varepsilon_\eta \col_1^{\sharp}$ by
\[\xymatrix {H^1(k_1,T)^\eta\ar[r]& \Hom(\F(\gm_1)^{\eta^{-1}},\Z_p)\ar[r]\ar@{=}[d]^{\,Lemma \,\ref{decomposition}}&\Lambda_1^\eta\\
&\Hom(\F(\gm_0)^{\eta^{-1}}\oplus \mathcal{C}^{\delta_1^{-1}}(\gm_1)^{\eta^{-1}},\Z_p).\ar[ru]}
\]
The first (surjective) map is induced by the pairing in Definition \ref{kuriharanoPn}, and the second map is \[f\mapsto \varepsilon_\eta\displaystyle \sum_{\sigma\in\G_1}f(\varepsilon_{\eta^{-1}}{\delta_1^1}^\sigma)\sigma=|\Delta|\varepsilon_{\eta}\displaystyle \sum_{\sigma\in\Gamma_1}f(\varepsilon_{\eta^{-1}}{\delta_1^1}^\sigma)\sigma\in\Z_p[\Delta][\Gamma_1]^\eta=\Lambda_1^\eta.\] From $\delta_1^1=\frac{a_p}{p}\delta_1^0-\delta_1^{-1}$, 
\[\varepsilon_\eta\sum_{\sigma\in\Gamma_1}f(\varepsilon_{\eta^{-1}}{\delta_1^1}^\sigma)\sigma=\varepsilon_\eta\sum_{\sigma\in\Gamma_1}\left(\frac{a_p}{p}f(\varepsilon_{\eta^{-1}}\delta_1^0)-f(\varepsilon_{\eta^{-1}}{\delta_1^{-1}}^\sigma)\right)\sigma.\]
$\varepsilon_{\eta^{-1}}\delta_1^0=\varepsilon_{\eta^{-1}} c_0$ and $(\varepsilon_\eta\delta_1^{-1})^\sigma_{\sigma\in \Gamma_1}$ are each a basis for $\F(\gm_0)^{\eta^{-1}}$ and generators for $\mathcal{C}_{ss}^{\delta_1^{-1}}(\gm_1)^{\eta^{-1}}$. From Lemma \ref{decomposition},
\[\begin{array}{ll}
\rank_{\Z_p}\mathcal{C}_{ss}^{\delta_1^{-1}}(\gm_1)^{\eta^{-1}}&=\rank_{\Z_p}\F(\gm_1)^{\eta^{-1}}+\rank_{\Z_p}\F(\gm_{-1})^{\eta^{-1}}-\rank_{\Z_p}\F(\gm_0)^{\eta^{-1}}\\
&=p-1,\text{ since $\rank_{\Z_p}(\F(\gm_{-1}^{\eta^{-1}})=0$ for ${\eta^{-1}}$ nontrivial.}
\end{array}\]
But note that $\displaystyle \sum_{\sigma\in\Gamma_1}(\varepsilon_{\eta^{-1}}\delta_1^{-1})^\sigma\sigma=\varepsilon_{\eta^{-1}} \Tr_{1/0}\delta_1^{-1}=\varepsilon_{\eta^{-1}}\delta_0^0=0$.
Thus, $(\varepsilon_{\eta^{-1}}{\delta_1^{-1}}^\sigma)_{\sigma\in\Gamma_1-\{1\}}$ is a basis for $\mathcal{C}_{ss}^{\delta_1^{-1}}(\gm_1)^{\eta^{-1}}$, so
\[\Hom(\mathcal{C}_{ss}^{\delta_1^{-1}}(\gm_1)^{\eta^{-1}},\Z_p)\cong\varepsilon_\eta X\Lambda_1\]
by $f \mapsto \varepsilon_\eta\sum_{\sigma\in\Gamma_1} f(\varepsilon_{\eta^{-1}}{\delta_1^{-1}}^\sigma)\sigma$. Note that $X\Lambda_1=\ker(\Z_p[\Delta][\Gamma_1]\rightarrow\Z_p[\Delta]).$

Since $|\Delta|=p-1$ is a unit and the dual basis of the basis $\{\varepsilon_{\eta}^{-1}(\delta_1^{-1})^\sigma\}_{\sigma\in\Gamma_1-\{1\}}$  for $\mathcal{C}_{ss}^{\delta_1^{-1}}(\gm_1)^{\eta^{-1}}$ has image $\{\varepsilon_{\eta}(\sigma-1)\}_{\sigma \in\Gamma_1-\{1\}}$ in $\varepsilon_{\eta}\Lambda_1$,
\[\im(\varepsilon_\eta \col_1^{\sharp})=\varepsilon_\eta\left(\frac{a_p}{p}\Z_p\sum_{\sigma\in\Gamma_1}\sigma-\sum_{\sigma\in \Gamma-\{1\}}\Z_p(\sigma-1)\right).\]
For $p\geq5, a_p=0$, so $\im(\varepsilon_\eta \col_1^{\sharp})=\varepsilon_\eta\sum_{\sigma\in\Gamma-\{1\}}\Z_p(\sigma-1)=\varepsilon_\eta X\Lambda_1$.



 
If $p=3$, then \[\im(\varepsilon_\eta \col_1^{\sharp})= \varepsilon_\eta((\frac{a_p}{p}+\frac{a_p}{p}(1+X)+\frac{a_p}{p}(1+X)^2)\Lambda_1+X\Lambda_1),\] since

 \[\Z_p(\frac{a_p}{p}+\frac{a_p}{p}(1+X)+\frac{a_p}{p}(1+X)^2) \cong \Lambda_1(\frac{a_p}{p}+\frac{a_p}{p}(1+X)+\frac{a_p}{p}(1+X)^2).\]
 
 Thus, 
 $\im(\varepsilon_\eta \col_1^{\sharp})= \varepsilon_\eta(a_p+a_pX+\frac{a_p}{p}X^2,X)\Lambda_1$. Now we can apply the arguments as in the proof of Proposition \ref{colupsilon} to show $\im(\varepsilon_\eta \col^{\sharp})=J^{\eta}$. Note that $\omega_1(X)=(1+X)^p-1\in p\Lambda+X\Lambda$.
\end{proof}

\subsection{The Main Conjecture}
We keep the notation of \cite{kobayashisthesis}: $K_n= \Q(\zeta_{p^{n+1}}), K_{-1}=\Q,$ and $K_\infty= \displaystyle \bigcup_n K_n$. 
Let $\gp_n$ be the place in $K_n$ and $\gp$ the place in $K_{\infty}$ above $p$. Kobayashi constructed the two $\pm$-Selmer groups
\[ \Sel^\pm(E/K_n)= \ker \left(\Sel(E/K_n)\longrightarrow \frac{E(K_{n,\gp_n})\otimes \Q_p/\Z_p}{E^\pm(K_{n,\gp_n})\otimes \Q_p/\Z_p}\right),\]
and put
\[\Sel^\pm(E/K_\infty)=\varinjlim_n\Sel^\pm(E/K_n).\]
Thus, we could also write
\[\Sel^\pm(E/K_\infty)=\ker\left(\Sel(E/K_\infty)\longrightarrow \frac{E(K_{\infty, \gp})\otimes \Q_p/\Z_p}{E^\pm(K_{\infty, \gp})\otimes \Q_p/\Z_p}\right),\]
where
\[\Sel(E/K_\infty)=\displaystyle \varinjlim_n \Sel(E/K_n), {E(K_{\infty, \gp})\otimes \Q_p/\Z_p}=\varinjlim_n {E(K_{n,\gp_n})\otimes \Q_p/\Z_p},\]
and 
\[{{E^\pm(K_{\infty,\gp})\otimes \Q_p/\Z_p}=\displaystyle \varinjlim_n {E^\pm(K_{n,\gp_n})\otimes \Q_p/\Z_p}}.\]

 The essential property is that $E^\pm(K_{\infty,\gp})\otimes\Q_p/\Z_p$ is the exact annihilator of $\ker(\col^\pm)$ under the local Tate pairing

\[ {\displaystyle\varprojlim_nH^1(K_{n,\gp_n},T)\times \varinjlim_n H^1(K_{n,\gp_n},V/T)\rightarrow \Q_p/\Z_p}.\]
Thus, we make the following definition:
\begin{definition} We let $E^{\sharp}_{\infty,\gp}\, (\text{resp. }E^{\flat}_{\infty,\gp})$ be the exact annihilator of $\ker \col^{\sharp}\, (\text{resp. }\ker \col^{\flat})$ under the local Tate pairing.
\end{definition}
\begin{lemma} We have $E(K_{\infty,\gp})\otimes \Q_p/\Z_p=\displaystyle\varinjlim_n H^1(K_{n,\gp_n},V/T)$.\end{lemma}
\begin{proof}$E(K_{n,\gp_n})\otimes \Z_p$ and $E(K_{n,\gp_n})\otimes  \Q_p/\Z_p$ are exact annihilators of each other, and we have $\varprojlim_n E(K_{n,\gp_n})\, \hat\otimes\, \Z_p=0$ since $p$ is supersingular (by \cite{coatesgreenberg} or \cite{hazewinkel}).\end{proof}
\begin{definition}We now define our two Selmer groups:
\[\Sel^{\sharp}(E/K_\infty):=\ker\left(\Sel(E/K_\infty)\longrightarrow \frac{E(K_{\infty,\gp})\otimes\Q_p/\Z_p}{E^{\sharp}_{\infty,\gp}}\right),\]
\[\Sel^{\flat}(E/K_\infty):=\ker\left(\Sel(E/K_\infty)\longrightarrow \frac{E(K_{\infty,\gp})\otimes\Q_p/\Z_p}{E^{\flat}_{\infty,\gp}}\right).\]
We also define their Pontryagin duals $\X^{*}(E/K_\infty):=\Hom(\Sel^{*}(E/K_\infty),\Q_p/\Z_p)$ for $*\in\{\sharp,\flat\}$.
\end{definition}


\begin{definition}Let $j$ the natural morphism  $\Spec K_n \rightarrow \Spec O_{K_n}[\frac{1}{p}]$, and let $H^q(\text{Spec }O_{K_n}[\frac{1}{p}],j_*T)$ be the $q$-th \'etale cohomology group. We define $\mathbf{H}^1(T):=\displaystyle \varprojlim_n$ $ H^q(\text{Spec }O_{K_n}[\frac{1}{p}],j_*T)$. We let $\mathbf{Z}(T)$ be the $\Lambda$-submodule of $\mathbf{H}^1(T)$ generated by Kato's zeta elements $\mathbf{z}^+$ and $\mathbf{z}^-$. (We can make this definition since $p$ is supersingular. See \cite[Section 5]{kobayashisthesis} or \cite{kato} for more details). 
\end{definition}
\begin{definition}$\X^0(E/K_\infty)$ is the Pontryagin dual to Kurihara's Selmer group (see \cite{kurihara}), which is 
\[\Sel^0(E/K_\infty):=\ker\left(\Sel(E/K_\infty)\longrightarrow E(K_{\infty,\gp})\otimes\Q_p/\Z_p\right).\]
\end{definition}

\begin{theorem}\label{cotorsion}Let $p$ be an odd supersingular prime, $\eta:\Delta\rightarrow \Z_p^\times$ be a character, and $*\in\{\sharp,\flat\}$ be chosen so that $L_p^*(E,\eta,X)\neq 0$. Then $\X^*(E/K_\infty)^\eta$ is $\Z_p[[X]]$-torsion.
\end{theorem}

\begin{proof} From \cite[Proposition 7.1 and the limit of (7.18)]{kobayashisthesis}, we have the exact sequence
\[\mathbf{H}^1(T)^\eta\rightarrow \mathbf{H}^1_{\text{Iw}}(T)^\eta\rightarrow \X(E/K_\infty)^\eta\rightarrow \X^0(E/K_\infty)^\eta\rightarrow 0,\]
noting that  $\varprojlim_n E(K_{n,\gp_n})\hat\otimes\Z_p=0$.
Since $L_p^*(E,\eta,X)\neq 0$ by assumption, the arguments in the proof of \cite[Theorem 7.3 i]{kobayashisthesis} provide us with an injection $\iota$ in the exact sequence of $\Z_p[[X]]$-modules
\begin{equation}\label{exactsequence}0\rightarrow \mathbf{H}^1(T)^\eta\stackrel{\iota}{\longrightarrow} (\mathbf{H}^1_{\text{Iw}}(T)^\eta/\ker \varepsilon_\eta\col^*)\rightarrow \X^*(E/K_\infty)^\eta\rightarrow \X^0(E/K_\infty)^\eta\rightarrow 0.\end{equation}
To see that the entire sequence is exact, we can use the Cassels-Poitou-Tate exact sequence (cf. \cite[Appendix A.3.2]{perrinriou}  or \cite[Theorem 1.5]{coatessujatha}) and the discussion in \cite{kobayashisthesis} preceding $(7.16)$. But $\coker(\iota)$ is killed by $L_p^*(E,\eta,X)\neq 0$ and is thus $\Z_p[[X]]$-torsion. Now $\X^0(E/K_\infty)^\eta$ is a torsion $\Z_p[[X]]$-module (\cite[Corollary 7.2]{kobayashisthesis}), so we are done.
\end{proof}

A consequence of Conjecture \ref{nonzeroness} would then be the following conjecture.
\begin{conjecture} Both $\X^\sharp(E/K_\infty)$ and $\X^\flat(E/K_\infty)$ are $\Lambda$-torsion.
\end{conjecture}
Using work of Kato in an analogous way to Kobayashi, we now prove the following theorem:

\begin{theorem}\label{formaltheorem}
Let $p$ be an odd supersingular prime, and $*\in\{\sharp,\flat\}$ so that $L_p^*(E,\eta,X)\neq 0$. Then for some integer $n\geq 0,$
\[\Char(\X^*\left(E/K_\infty)^\eta\right)\supseteq \left(p^n\frac{1}{X}L_p^*(E,\eta,X)\right) \text{if }*=\sharp, \eta \neq 1, \text{and } a_p=0,\]
\[\Char(\X^*\left(E/K_\infty)^\eta\right)\supseteq \left(p^n L_p^*(E,\eta,X)\right)\text{ for all other cases.}\]

 Further, if the $p$-adic representation $\Gal(\overline{\Q}/\Q)\rightarrow \GL_{\Z_p}(T)$ on the automorphism group of the $p$-adic Tate module $T$ is surjective, we can take $n=0$.
\end{theorem}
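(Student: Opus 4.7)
The plan is to follow Kobayashi's strategy in \cite{kobayashisthesis}, which combines Kato's Euler system of zeta elements with global Poitou--Tate duality, and to replace Kobayashi's decomposition $(Col^+, Col^-)$ by our $(Col^\vartheta, Col^\upsilon)$ whenever it enters the argument. Four ingredients are needed: the Poitou--Tate sequence, the identification $\varepsilon_\eta Col^*(\mathbf{z}^{\eta(-1)}) = L_p^*(E,\eta,X)$, Kato's divisibility for the Iwasawa cohomology quotient by the zeta element, and the image computations of Propositions \ref{colupsilon} and \ref{proptheta}.

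First I would run the global Poitou--Tate sequence. Since $E^*_{\infty,\gp}$ was defined as the exact annihilator of $\ker Col^*$ under local Tate duality, global duality yields an exact sequence of $\Lambda^\eta$-modules
\[\mathbf{H}^1_{\Iw}(T)^\eta / \Lambda^\eta\,\mathbf{z}^{\eta(-1)} \xrightarrow{\;Col^*\;} \im(\varepsilon_\eta Col^*)/\Lambda^\eta\,\varepsilon_\eta Col^*(\mathbf{z}^{\eta(-1)}) \to \X^*(E/K_\infty)^\eta \to (\text{controlled error}),\]
where the error term is controlled by Kato's vanishing statements for $H^2_{\Iw}$. Lemma \ref{yilemma}, applied at the zeta element, identifies $\varepsilon_\eta Col^*(\mathbf{z}^{\eta(-1)})$ with $L_p^*(E,\eta,X)$, and then Kato's Euler system theorem bounds the characteristic ideal of $\mathbf{H}^1_{\Iw}(T)^\eta / \Lambda^\eta\,\mathbf{z}^{\eta(-1)}$ by that of a strict Selmer group up to a power $p^n$ which vanishes when the mod-$p$ Galois representation is surjective.

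Combining these ingredients, the divisibility follows once we know the cokernel $\Lambda^\eta / \im(\varepsilon_\eta Col^*)$. For $*=\upsilon$, Proposition \ref{colupsilon} gives surjectivity, so the middle term above is simply $\Lambda^\eta/(L_p^\upsilon)$ and we obtain $p^n L_p^\upsilon \in \Char(\X^\upsilon(E/K_\infty)^\eta)$ directly. For $*=\vartheta$ with $\eta$ trivial, Proposition \ref{proptheta} again gives surjectivity. The remaining case is $*=\vartheta$ with $\eta\neq 1$, where $\im(\varepsilon_\eta Col^\vartheta) = J^\eta$ with $J = (a_p + a_p X + \tfrac{a_p}{p}X^2, X)$. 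When $a_p\neq 0$ (so $p\in\{2,3\}$ and $\tfrac{a_p}{p}=\pm1$ is a unit) the ideal $J$ already contains $p$, so $\Lambda^\eta / J^\eta$ is finite and its contribution is absorbed into the factor $p^n$. When $a_p = 0$ the ideal collapses to $(X)$, which is infinite and produces a genuine $X$-factor in the cokernel; this is exactly the $\frac{1}{X}$ appearing in the theorem statement.

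The main obstacle I anticipate is this last case analysis, reconciling the Poitou--Tate cokernel with the non-principal image of $Col^\vartheta$: tracking characteristic ideals of finitely-presented but not necessarily torsion $\Lambda^\eta$-modules through the global duality sequence, and checking that no spurious $\mu$-invariant is introduced, requires some care. Once the principal case is handled, Kato's refinement, which upgrades $p^n$ to $p^0$ under the surjectivity of $\Gal(\overline{\Q}/\Q) \to \GL_{\Z_p}(T)$, transfers verbatim from \cite{kobayashisthesis}, since it is a statement purely about the Euler system and the Galois module $T$ and does not see our rearrangement of the Coleman map.
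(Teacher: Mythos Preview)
Your plan matches the paper's proof: the paper packages your Poitou--Tate/global-duality step as Proposition~\ref{threeexactsequences} (an extension of \cite[Theorem~7.4]{kobayashisthesis}), giving the four-term exact sequence
\[
0 \to \mathbf{H}^1(T)^\eta/\mathbf{Z}(T)^\eta \to (\im \varepsilon_\eta Col^*)/(L_p^*) \to \X^*(E/K_\infty)^\eta \to \X^0(E/K_\infty)^\eta \to 0,
\]
and then invokes Kato's divisibility exactly as you describe. One small correction: in the case $*=\vartheta$, $\eta\neq 1$, $a_p\neq 0$, you say the finite quotient $\Lambda^\eta/J^\eta$ is ``absorbed into the factor $p^n$,'' but in fact a finite $\Lambda$-module is pseudo-null and contributes \emph{nothing} to characteristic ideals, so $\Char\bigl(J^\eta/(L_p^\vartheta)\bigr)=(L_p^\vartheta)$ on the nose (this is the paper's closing remark for $p=3$); your phrasing would otherwise obstruct the $n=0$ refinement under surjectivity of the Galois representation.
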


To prove this, we need a proposition by Kurihara \cite[Proposition 7.1 ii]{kobayashisthesis} and a theorem by Kato \cite[Theorem 12.5]{kato}:
\begin{propn}[Kurihara] $\mathbf{H}^2(T)$ and $\X^0(E/K_\infty)$ are isomorphic as $\Lambda$-modules.
\end{propn}

\begin{theorem}[Kato] Suppose that $\Gal(\overline{\Q}/\Q)\rightarrow \GL_{\Z_p}(T)$ is surjective. Then 
\[\Char\left(\mathbf{H}^2(T)^\eta\right)\supseteq \Char\left(\mathbf{H}^1(T)^\eta/\mathbf{Z}(T)^\eta\right).\]
\end{theorem}

Theorem \ref{formaltheorem} then follows from the following exact sequences: 

\begin{propn}\label{threeexactsequences} Choose $*\in\{\sharp,\flat\}$ so that $L_p^*(E,\eta,X)\neq 0$. Then there are exact sequences
\[0\rightarrow \mathbf{H}^1(T)^\eta/\mathbf{Z}(T)^\eta\rightarrow J^\eta/(L_p^*(E,\eta,X))\rightarrow \X^*(E/K_\infty)^\eta\rightarrow\X^0(E/K_\infty)^\eta\rightarrow 0 \text{ if } *=\sharp, \eta \neq 1,\]
\[0\rightarrow \mathbf{H}^1(T)^\eta/\mathbf{Z}(T)^\eta\rightarrow \Lambda^\eta/(L_p^*(E,\eta,X))\rightarrow \X^*(E/K_\infty)^\eta\rightarrow\X^0(E/K_\infty)^\eta\rightarrow 0\text{ if not.}\]
\end{propn}

\begin{proof}This follows from the exact sequence (\ref{exactsequence}) in the proof of Theorem \ref{cotorsion}, Proposition \ref{colupsilon}, and Proposition \ref{proptheta}.
\end{proof}

Since $p$ is odd supersingular, Kato's main conjecture \cite[Conjecture 12.10]{kato} reads:

\begin{conjecture}[Kato] 
For a character $\eta:\Delta\rightarrow \Z_p^\times, \Char \left(\X^0(E/K_\infty)^\eta\right)=\Char\left(\mathbf{H}^1(T)^\eta/\mathbf{Z}(T)^\eta\right).$
\end{conjecture}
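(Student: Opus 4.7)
The plan is to split the asserted equality of characteristic ideals into two opposing divisibilities, after first settling the easy inclusion $\mathbf{Z}(T)^\eta \subseteq \mathbf{H}^1(T)^\eta$. This inclusion is essentially built into the construction: Kato's zeta elements are assembled from Beilinson--Kato classes and Siegel units, and I would verify directly that they satisfy the correct local conditions (unramified away from $p$, crystalline/Bloch--Kato at $p$) to land in $\mathbf{H}^1(T)^\eta$, following Kato's computations in \cite{kato}.

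The first divisibility to attack is the one afforded by Kato's Euler system, namely $\Char(\X^0(E/K_\infty)^\eta)$ divides $\Char(\mathbf{H}^1(T)^\eta/\mathbf{Z}(T)^\eta)$. I would run the standard Euler system machine: Kolyvagin-style derivative classes manufactured from the norm-compatible family of zeta elements annihilate controlled pieces of $\X^0$, and Kato's explicit reciprocity law---which identifies the image of the bottom zeta element under the dual exponential with the $p$-adic $L$-function---converts these annihilation statements into the required divisibility of characteristic ideals. This is exactly the mechanism Theorem \ref{formaltheorem} extends to the supersingular $a_p \neq 0$ setting of the paper, once the four-term exact sequence of Proposition \ref{threeexactsequences} is used to interchange $\X^0$ with $\X^\vartheta$ and $\X^\upsilon$.

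The hard part, and the main obstacle, is the reverse divisibility $\Char(\mathbf{H}^1(T)^\eta/\mathbf{Z}(T)^\eta)$ divides $\Char(\X^0(E/K_\infty)^\eta)$, which cannot be obtained from Euler systems alone. My approach would follow the Skinner--Urban program \cite{skinnerurban}: construct nontrivial classes in the dual Selmer group from congruences between Eisenstein series and cusp forms on $\mathrm{U}(2,2)$ (or $\mathrm{GSp}(4)$), detect the Eisenstein ideal via $L_p$ itself, and conclude that $L_p$ divides $\Char(\X^0)$. The obstacle is that this method requires restrictive hypotheses on the residual Galois representation (irreducibility, ramification and squarefree-level conditions), and in the supersingular, non-ordinary setting demands automorphic input outside the main scope of \cite{skinnerurban}---which is precisely why the statement is recorded here as a conjecture rather than a theorem, and why the paper proves only the Euler-system half in Theorem \ref{formaltheorem}.
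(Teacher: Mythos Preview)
The statement you are addressing is labeled a \emph{conjecture} in the paper, not a theorem: the paper does not offer any proof of it. It is simply Kato's main conjecture, quoted so that the authors can record (via Proposition \ref{threeexactsequences}) its equivalence with their own Main Conjecture for $\Sel^\vartheta$ and $\Sel^\upsilon$, and then invoke Kato's one-sided divisibility from \cite{kato} to deduce Theorem \ref{formaltheorem}. There is therefore no ``paper's own proof'' against which to compare your proposal.

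That said, your outline accurately reflects the expected structure of any proof: the inclusion $\mathbf{Z}(T)^\eta \subseteq \mathbf{H}^1(T)^\eta$ is part of Kato's construction, the Euler-system divisibility is the content of \cite{kato} (and is exactly what the paper imports to prove Theorem \ref{formaltheorem}), and the reverse divisibility lies outside the scope of Euler-system methods and would require automorphic input of Skinner--Urban type. You correctly identify this last point as the obstruction, and you yourself note that this is why the statement is a conjecture. So your write-up is not a proof but a correct summary of the state of affairs; just be clear that you are not proving anything beyond what Kato already proved, and that the paper makes no claim to do so either.
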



In view of Proposition \ref{threeexactsequences}, the following conjecture is equivalent to the main conjecture of Kato, and to that of Perrin-Riou \cite{perrinriou} :

\begin{mainconjecture} Let $p$ be an odd supersingular prime, and choose $*\in\{\sharp,\flat\}$ so that $L_p^*(E,\eta,X)\neq 0$. In Theorem \ref{formaltheorem}, $n=0$ and the three inclusions are three equalities:
\[\Char(\X^*\left(E/K_\infty)^\eta\right)= \left(\frac{1}{X}L_p^*(E,\eta,X)\right) \text{ if }*=\sharp, \eta \neq 1, \text{and } a_p=0,\]
\[\Char(\X^*\left(E/K_\infty)^\eta\right)= \left(L_p^*(E,\eta,X)\right)\text{ for all other cases.}\]
\end{mainconjecture}

Theorem \ref{formaltheorem} then follows from Kato's divisibility statement in \cite{kato}. Note that for $a_3\neq0$, $\Char(J^\eta/L_3^{\sharp}(E,\eta,X))=(L_3^{\sharp}(E,\eta,X))$. (If $a_3\neq0$ and $\eta$ nontrivial, $\Lambda^\eta/J^\eta\cong \mathbb{F}_3$ is pseudo-null.)

\begin{open problem} One of the insights in \cite{kobayashisthesis} was to describe the $E^\pm(K_{n,\gp})$ explicitly using trace maps. If one could explicitly write $E^{\sharp/\flat}_{\infty,\gp}=\displaystyle \varinjlim_n {E^{\sharp/\flat}(K_{n,\gp})\otimes\Q_p/\Z_p}$ for some easily defined $E^{\sharp/\flat}(K_{n,\gp}),$ this should lead to some interesting applications.
\end{open problem}

\begin{remark} We also remark that the techniques of Pollack and Rubin \cite{pollackrubin} can not be extended, since for an elliptic curve with complex multiplication supersingular at $p$, we always have $a_p=0$. 
\end{remark}

\begin{small}
\textit{Acknowledgments.} Our thanks go to everybody who deserves them, in particular Takeshi Tsuji, both for his initial bit of skepticism for this topic and relentless encouragement later, improvements of various proofs, and a very valuable suggestion that went into Lemma \ref{importantlemma}; to Shinichi Kobayashi, Robert Pollack, and Antonio Lei 
 for helpful comments and emails, and to the buoyant, cheerful atmosphere of room 406 at Todai Math. For helpful suggestions on the exposition and for pointing out mistakes, we thank Barry Mazur, Robert Pollack, Joseph Silverman, and the anonymous referee.
\end{small}
\end{section}



\end{document}